\newtheorem{theorem}{Theorem}[section]
\newtheorem{lemma}[theorem]{Lemma}
\newtheorem{corollary}[theorem]{Corollary}
\theoremstyle{definition}
\theoremstyle{remark}
\numberwithin{equation}{section}
\begin{document}

\title[Series involving central binomial coefficients]{Powers of the arcsine and
infinite classes of series involving central binomial coefficients}

\author{Karl Dilcher}
\address{Department of Mathematics and Statistics\\
 Dalhousie University\\
         Halifax, Nova Scotia, B3H 4R2, Canada}
\email{dilcher@mathstat.dal.ca}

\author{Christophe Vignat}
\address{Laboratoire des Signaux et Syst\`emes, CentraleSup\'elec, Universit\'e
Paris-Saclay, Gif-sur-Yvette, France and Department of
Mathematics, Tulane University, New Orleans, LA 70118, USA}
\email{cvignat@tulane.edu}
\keywords{Series, integral, arcsine, central binomial coefficient.}
\subjclass[2010]{Primary: 33E20; Secondary: 05A10, 33B10.}
\thanks{The first author was supported in part by the Natural Sciences and 
Engineering Research Council of Canada.}


\setcounter{equation}{0}

\begin{abstract}
A general integral expression to transform power series is applied to 
$\arcsin{x}$ and its positive integer powers. We concentrate on the first to the
fourth powers and obtain infinite classes of new power series involving central 
binomial coefficients. Specializing the variable to appropriate simple values
leads to different classes of series expansions for $\pi$ and some of its
positive integer powers. We also discuss several limit expressions and 
connections with hypergeometric series.
\end{abstract}

\maketitle

\section{Introduction}\label{sec:1}

The well-known power series for $\arcsin{x}$ and $(\arcsin{x})^2$,
\begin{equation}\label{1.1}
\arcsin{x} = \sum_{k=0}^\infty\frac{\binom{2k}{k}}{4^k}\frac{x^{2k+1}}{2k+1},
\qquad\qquad
(\arcsin{x})^2=\frac{1}{2}\sum_{k=1}^\infty\frac{(2x)^{2k}}{\binom{2k}{k}k^2},
\end{equation}
have long been used as a source for series representations for $\pi$ and
$\pi^2$ involving central binomial coefficients. This is achieved by
setting, for instance, $x=\frac{1}{2}, \frac{1}{2}\sqrt{2}, 
\frac{1}{2}\sqrt{3}$, 1, or other related algebraic numbers. These and 
numerous further results can be found, for instance, in the much-cited and
interesting paper \cite{Le} by D.~H.~Lehmer. Other relevant publications include
\cite{BG}, \cite{DFG}, \cite{Gl}, \cite{Ko}, and \cite{XZ}.

It is the purpose of the present paper to use evaluations of certain integrals
(actually moments) of $\arcsin{x}$ and $(\arcsin{x})^2$ to obtain infinite 
classes of extensions of the series in \eqref{1.1}. This can also be done for
higher powers of $\arcsin{x}$, and we will carry this out for the third and 
fourth powers. As a typical example we obtain, for instance, the series
\begin{equation}\label{1.2}
\frac{2^{n+2}}{\binom{n}{n/2}}\sum_{k=0}^\infty\frac{4^k}{\binom{2k}{k}k(2k+n)}
=\pi^2+2\sum_{j=1}^{n/2}\frac{4^j}{\binom{2j}{j}j^2},
\end{equation}
valid for all even $n\geq 0$; see Corollary~\ref{cor:4.3}. When $n=0$, this 
reduces to the second identity in \eqref{1.1} for $x=1$. We will also obtain 
various limit expressions, such as
\begin{equation}\label{1.3}
\frac{2^{n+1}}{\binom{n}{n/2}}\sum_{k=0}^\infty
\frac{4^k}{\binom{2k}{k}k(2k+n)}\rightarrow\pi^2
\quad\hbox{as}\quad n\rightarrow\infty,
\end{equation}
where $\binom{n}{n/2}$ is considered a generalized binomial coefficient when
$n$ is odd. This follows from \eqref{1.2}; see Corollary~\ref{cor:7.1}.

In the short Section~\ref{sec:2} we state and prove a general result which is
the basis for most of what follows. Sections~\ref{sec:3}--\ref{sec:6} then
deal with results based on the first to the fourth powers, respectively, of
the $\arcsin{x}$. Section~\ref{sec:7} contains some limit results, and we 
conclude this paper with some results involving hypergeometric functions in 
Section~\ref{sec:8}.

\section{A general result}\label{sec:2}

The main results in Sections~\ref{sec:3}--\ref{sec:6} will all be based on the
following integral representation. We do not claim this to be original, but
we were unable to find it in the literature.

\begin{theorem}\label{thm:2.1}
Suppose that $f(x)$ has the power series expansion
\begin{equation}\label{2.1}
f(x)=\sum_{k=1}^{\infty}a_k\cdot\frac{x^k}{k} \qquad(|x|<R),
\end{equation}
where $R$ is the radius of convergence. Then, for any integer $n\geq 0$ and 
real $x$ with $|x|<R$, we have
\begin{equation}\label{2.2}
\sum_{k=1}^{\infty}a_k\cdot\frac{x^{k+n}}{k+n}
=f(x)x^n-n\int_0^x t^{n-1}f(t)dt.
\end{equation}
\end{theorem}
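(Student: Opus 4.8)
The plan is to verify \eqref{2.2} by showing that both sides have the same derivative and agree at $x=0$. The essential observation is that the factor $1/k$ in the expansion \eqref{2.1} is exactly what differentiation removes: term-by-term differentiation, valid inside the radius of convergence, gives $f'(t)=\sum_{k=1}^{\infty}a_k t^{k-1}$, so that
\[
t f'(t)=\sum_{k=1}^{\infty}a_k t^{k}.
\]
This identity is the key link between the hypothesis on $f$ and the series I wish to sum.

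First I would denote the left-hand side of \eqref{2.2} by $L(x)$ and the right-hand side by $R(x)$. Differentiating $L$ term by term gives
\[
L'(x)=\sum_{k=1}^{\infty}a_k\, x^{k+n-1}=x^{n-1}\sum_{k=1}^{\infty}a_k\, x^{k}=x^{n}f'(x),
\]
where the last step uses the observation above. Differentiating $R$ by the product rule and the fundamental theorem of calculus yields
\[
R'(x)=n x^{n-1}f(x)+x^{n}f'(x)-n x^{n-1}f(x)=x^{n}f'(x),
\]
so the two derivatives coincide on $|x|<R$. Next I would compare values at $x=0$. Since the series \eqref{2.1} begins at $k=1$ we have $f(0)=0$; the left-hand series clearly vanishes at $x=0$, and the integral in $R$ vanishes at its lower limit (the factor $n$ absorbs the case $n=0$), so $L(0)=R(0)=0$. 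Equality of the derivatives together with equality at one point forces $L\equiv R$ on $(-R,R)$.

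The only point requiring care, and the step I expect to be the main obstacle, is the term-by-term differentiation and the rearrangement $x^{n-1}\sum a_k x^k=x^{n-1}\cdot t f'$ it relies on; this is legitimate because a power series may be differentiated term by term within its radius of convergence, and the series $\sum a_k x^k$, $\sum a_k x^{k-1}$, and $\sum a_k x^{k+n-1}$ all share the radius $R$. Alternatively, I could avoid differentiation entirely by writing $x^{k+n}/(k+n)=\int_0^x t^{k+n-1}\,dt$, interchanging sum and integral (justified by uniform convergence on compact subintervals of $(-R,R)$) to obtain $L(x)=\int_0^x t^{n-1}\bigl(t f'(t)\bigr)\,dt=\int_0^x t^{n}f'(t)\,dt$, and then integrating by parts. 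The boundary term $\bigl[t^{n}f(t)\bigr]_0^x$ contributes $x^{n}f(x)$ (the lower endpoint vanishing because either $n\geq 1$ or $f(0)=0$), while the remaining integral supplies the term $-n\int_0^x t^{n-1}f(t)\,dt$, reproducing $R(x)$.
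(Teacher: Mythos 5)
Your proposal is correct, but it takes a genuinely different route from the paper. The paper's proof works termwise: for each $k\geq 1$ it applies integration by parts to the single monomial $t^k/k$ against $t^{n-1}$, obtaining $n\int_0^x t^{n-1}\tfrac{t^k}{k}\,dt=\tfrac{x^{n+k}}{k}-\tfrac{x^{n+k}}{n+k}$, then multiplies by $a_k$, sums over $k$, and justifies interchanging summation and integration inside the radius of convergence (the case $n=0$ being split off as trivial). Your main argument instead verifies \eqref{2.2} by showing that both sides have the same derivative $x^n f'(x)$ and both vanish at $x=0$; this trades the sum--integral interchange for term-by-term differentiation of a power series, which is the cleaner standard fact to invoke, handles all $n\geq 0$ uniformly, and uses only the product rule and the fundamental theorem of calculus. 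Your alternative argument (writing $x^{k+n}/(k+n)=\int_0^x t^{k+n-1}\,dt$, summing under the integral to get $\int_0^x t^n f'(t)\,dt$, and then integrating by parts once) is essentially the paper's proof with the two operations done in the opposite order: the paper integrates by parts on each term and then sums, while you sum first and perform a single global integration by parts. All routes are rigorous; yours is the more economical verification, while the paper's termwise computation makes visible how each summand of the left-hand side of \eqref{2.2} arises. One small point you handled correctly but that deserves the explicit mention you gave it: for $n=0$ the integral term must be read as absent (or one notes that $t^{-1}f(t)$ extends continuously to $t=0$ since $f(0)=0$), so that the factor $n$ indeed annihilates it.
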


\begin{proof}
For $n=0$, the theorem is trivially true. For integers $n\geq 1$ and $k\geq 1$,
we use integration by parts to get
\[
\int_0^x\left(\frac{d}{dt}t^n\right)\frac{t^k}{k}dt
=\left[\frac{t^{n+k}}{k}\right]_0^x-\int_0^x t^{n+k-1}dt,
\]
which evaluates as
\begin{equation}\label{2.3}
n\int_0^xt^{n-1}\frac{t^k}{k}dt = \frac{t^k}{k}x^n-\frac{t^{n+k}}{n+k}.
\end{equation}
Multiplying both sides of \eqref{2.3} by $a_k$ and summing over all $k\geq 1$,
then using \eqref{2.1}, we get \eqref{2.2}, as desired. Since we are dealing
with power series, the interchange of summation and integration is justified
within the radius of convergence.
\end{proof}

The fact that Theorem~\ref{thm:2.1} requires $f(0)=0$ is no loss of generality
since we can deal with $f(x)-f(0)$ instead, if necessary.
Since most of the functions of interest to us are either even or odd, it is
convenient to reformulate Theorem~\ref{thm:2.1} as follows.

\begin{corollary}\label{cor:2.2}
Suppose that $f(x)$ is either an even or an odd function, analytic in a 
neighbourhood of $0$, with power series expansion
\begin{equation}\label{2.4}
f(x)=\sum_{k=1-\delta}^{\infty}c_k\cdot\frac{x^{2k+\delta}}{2k+\delta}
\qquad(|x|<R),
\end{equation}
where $\delta$ is either $0$ or $1$, and $R$ is the radius of convergence.
Then, for any integer $n\geq 1$ and real $x$ with $|x|<R$, we have
\begin{equation}\label{2.5}
\sum_{k=1-\delta}^{\infty}c_k\cdot\frac{x^{2k+\delta+n}}{2k+\delta+n}
=f(x)x^n-n\int_0^x t^{n-1}f(t)dt.
\end{equation}
\end{corollary}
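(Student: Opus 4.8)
The plan is to derive Corollary~\ref{cor:2.2} as a direct specialization of Theorem~\ref{thm:2.1}. The key observation is that the series \eqref{2.4} is already in the exact shape required by \eqref{2.1}, once we make the correct change of summation variable. So the whole task is really a matter of matching notation and checking that the hypotheses of the theorem are met.

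First I would relabel the summation index. In \eqref{2.4} the exponent is $2k+\delta$, so I set $m=2k+\delta$. As $k$ ranges over the integers $k\geq 1-\delta$, the index $m$ ranges over exactly those positive integers sharing the parity of $\delta$: if $\delta=0$ then $m$ runs through the positive even integers $2,4,6,\dots$, and if $\delta=1$ then $m$ runs through the positive odd integers $1,3,5,\dots$. In either case $m\geq 1$, so $f(0)=0$ as required by Theorem~\ref{thm:2.1}. Defining the coefficients $a_m$ by $a_m=c_k$ when $m=2k+\delta$ and $a_m=0$ otherwise (that is, $a_m=0$ for indices of the wrong parity), the expansion \eqref{2.4} becomes precisely
\[
f(x)=\sum_{m=1}^{\infty}a_m\cdot\frac{x^{m}}{m},
\]
which is \eqref{2.1}. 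The evenness or oddness of $f$ is exactly what guarantees that only one parity class of coefficients is nonzero, and this is what lets the bilateral-looking sum in \eqref{2.4} be recast as a single sum over all $m\geq1$ with half the terms zero.

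Next I would simply invoke Theorem~\ref{thm:2.1} with these $a_m$. Its conclusion \eqref{2.2} reads $\sum_{m\geq1}a_m x^{m+n}/(m+n)=f(x)x^n-n\int_0^x t^{n-1}f(t)\,dt$. The right-hand side is untouched and already matches \eqref{2.5}. On the left-hand side I undo the substitution: since $a_m=0$ except when $m=2k+\delta$, the sum collapses back to a sum over $k\geq1-\delta$, and $x^{m+n}/(m+n)$ becomes $x^{2k+\delta+n}/(2k+\delta+n)$ with coefficient $c_k$. This reproduces \eqref{2.5} verbatim, completing the argument.

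I do not expect a genuine obstacle here, since the corollary is a transcription rather than a new result. The only point requiring a word of care is the bookkeeping around the lower summation limit $1-\delta$: when $\delta=1$ this lower limit is $0$, so the term $k=0$ is included and corresponds to $m=1$, the linear term; one must confirm this term is handled correctly and that no spurious $k=0$ term sneaks in when $\delta=0$ (there the sum starts at $k=1$, giving $m=2$). Apart from verifying that this index shift is consistent, and noting that the radius of convergence $R$ is preserved under the relabeling, the corollary follows immediately from the theorem.
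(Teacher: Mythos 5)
Your proposal is correct and is essentially the paper's own proof: both arguments reduce Corollary~\ref{cor:2.2} to Theorem~\ref{thm:2.1} by matching the coefficients in \eqref{2.4} with those in \eqref{2.1}, using parity to ensure that half of the $a_m$ vanish. The only cosmetic difference is direction --- the paper starts from the $a_k$ of \eqref{2.1} and defines $c_k$ from them, while you start from the $c_k$ of \eqref{2.4} and pad with zeros to build the $a_m$ --- but the content, including the index bookkeeping at $k=1-\delta$, is identical.
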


\begin{proof}
When $f$ is even, then $a_{2k+1}=0$ for all $k\geq 0$, and we set $c_k=a_{2k}$
for $k\geq 1$ and $c_0=0$. This is the case $\delta=0$. When $f$ is odd, then
$a_{2k}=0$ for all $k\geq 0$, and we set $c_k=a_{2k+1}$, which is the case
$\delta=1$.
\end{proof}

At this point, an important remark on the convergence of series is in order.
Using the well-known asymptotic relation 
\[
\binom{2k}{k}\sim \frac{4^k}{\sqrt{\pi k}}\quad\hbox{as}\quad k\to\infty
\]
(see also \eqref{7.10} below), we see that the radius of convergence of both
series in \eqref{1.1} is $R=1$, while the series still converge at $x=1$.
By Abel's limit theorem it is therefore legitimate to simply substitute $x=1$
in the various series we deal with, to get identities that usually involve $\pi$.
This will be done throughout this paper, without further comments.

Binomial expressions are also often used in this paper, and the identities
\begin{equation}\label{2.6}
\binom{n}{k}\frac{n+1}{k+1} = \binom{n+1}{k+1},\qquad
\binom{2n-2}{n-1}=\frac{n}{2(2n-1)}\binom{2n}{n}
\end{equation}
are particularly useful in the following sections.

\section{The case $f(x)=\arcsin{x}$}\label{sec:3}

In order to apply Corollary~\ref{cor:2.2}, we need to evaluate
\begin{equation}\label{3.1}
I_{\nu}^{(1)}(x) := \int_0^x t^{\nu}\arcsin{t}\,dt.
\end{equation}
This evaluation is done in the following lemma and its proof.

\begin{lemma}\label{lem:3.1}
For integers ${\nu}\geq 0$ we have
\begin{equation}\label{3.2}
I_{\nu}^{(1)}(x) = \frac{1}{\nu+1}
\left(f_{\nu}^{(1)}(x)\arcsin{x}+g_{\nu}^{(1)}(x)\sqrt{1-x^2}+h_{\nu}^{(1)}\right),
\end{equation}
where for integers $\ell\geq 0$,
\begin{align}
f_{2\ell}^{(1)}(x) &= x^{2\ell+1},\qquad
f_{2\ell+1}^{(1)}(x) = x^{2\ell+2} 
- \frac{1}{4^{\ell+1}}\binom{2\ell+2}{\ell+1},\label{3.3}\\
g_{2\ell}^{(1)}(x) &= \frac{4^{\ell}}{(2\ell+1)\binom{2\ell}{\ell}}
\sum_{j=0}^\ell\binom{2j}{j}\left(\frac{x}{2}\right)^{2j},\label{3.4}\\
g_{2\ell+1}^{(1)}(x) 
&= \frac{1}{2\cdot 4^{\ell+1}}\binom{2\ell+2}{\ell+1}
\sum_{j=0}^\ell\frac{(2x)^{2j+1}}{(2j+1)\binom{2j}{j}},\label{3.5}
\end{align}
and $h_{\nu}^{(1)}=-g_{\nu}^{(1)}(0)$; in particular,
\begin{equation}\label{3.6}
h_{2\ell}^{(1)} = -\frac{4^{\ell}}{(2\ell+1)\binom{2\ell}{\ell}},\qquad
h_{2\ell+1}^{(1)} = 0.
\end{equation}
\end{lemma}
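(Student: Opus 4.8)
The plan is to verify the closed form \eqref{3.2} by differentiation rather than by computing the integral from scratch. Write $F_\nu(x)$ for the right-hand side of \eqref{3.2}. Because $I_\nu^{(1)}(0)=0$ while $\arcsin 0=0$ and $\sqrt{1-0^2}=1$, the prescription $h_\nu^{(1)}=-g_\nu^{(1)}(0)$ forces $F_\nu(0)=0$, so the two sides agree at the base point. By the fundamental theorem of calculus it then suffices to check that $F_\nu'(x)=x^\nu\arcsin x$ on $(-1,1)$, since the latter is exactly $\frac{d}{dx}I_\nu^{(1)}(x)$.

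Differentiating and using $(\arcsin x)'=(1-x^2)^{-1/2}$ and $(\sqrt{1-x^2})'=-x(1-x^2)^{-1/2}$, the derivative of $(\nu+1)F_\nu$ splits into a piece proportional to $\arcsin x$ and an algebraic remainder:
\[
(\nu+1)F_\nu'(x)=(f_\nu^{(1)})'(x)\arcsin x
+\frac{f_\nu^{(1)}(x)+(1-x^2)(g_\nu^{(1)})'(x)-x\,g_\nu^{(1)}(x)}{\sqrt{1-x^2}}.
\]
Matching the coefficient of $\arcsin x$ with the target $(\nu+1)x^\nu$ amounts to $(f_\nu^{(1)})'(x)=(\nu+1)x^\nu$, which is immediate from \eqref{3.3} in both parities. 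Hence everything reduces to the single polynomial identity
\[
f_\nu^{(1)}(x)+(1-x^2)(g_\nu^{(1)})'(x)-x\,g_\nu^{(1)}(x)=0
\qquad(-1<x<1).
\]

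I would prove this identity by comparing coefficients, treating the two parities separately. For $\nu=2\ell$ the function $g_{2\ell}^{(1)}$ is even, so $(1-x^2)(g_{2\ell}^{(1)})'-x\,g_{2\ell}^{(1)}$ is odd; collecting the coefficient of $x^{2m+1}$ with $0\le m\le\ell-1$ produces, up to the constant $\tfrac{4^\ell}{(2\ell+1)\binom{2\ell}{\ell}}$, the combination $2(m+1)\binom{2m+2}{m+1}/4^{m+1}-(2m+1)\binom{2m}{m}/4^m$, which vanishes by the second identity in \eqref{2.6} (applied with $n=m+1$), while the top term is $-x^{2\ell+1}$ and cancels $f_{2\ell}^{(1)}(x)=x^{2\ell+1}$. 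For $\nu=2\ell+1$ the roles of even and odd swap: $(1-x^2)(g_{2\ell+1}^{(1)})'-x\,g_{2\ell+1}^{(1)}$ is even, its coefficients of $x^{2m}$ for $1\le m\le\ell$ vanish because of the relation $(2m+1)a_m=2m\,a_{m-1}$ for $a_j=2^{2j+1}/\big((2j+1)\binom{2j}{j}\big)$, again a consequence of the second identity in \eqref{2.6}; its constant term equals $\tfrac{1}{4^{\ell+1}}\binom{2\ell+2}{\ell+1}$ and cancels the constant of $f_{2\ell+1}^{(1)}$, and its leading term $-x^{2\ell+2}$ cancels the $x^{2\ell+2}$ in $f_{2\ell+1}^{(1)}$.

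The only real obstacle is the bookkeeping in this coefficient comparison: one must reindex the two sums coming from $(1-x^2)(g_\nu^{(1)})'$ and $x\,g_\nu^{(1)}$ so that the interior terms telescope through the binomial recurrences, and then separately account for the two boundary contributions (the constant and the top degree) that are \emph{not} meant to cancel against each other but rather to reproduce the polynomial $f_\nu^{(1)}$. As an independent check of the structure, one can instead integrate \eqref{3.1} by parts to get $I_\nu^{(1)}(x)=\tfrac{x^{\nu+1}}{\nu+1}\arcsin x-\tfrac{1}{\nu+1}\int_0^x t^{\nu+1}(1-t^2)^{-1/2}\,dt$, and then evaluate the remaining integral by the reduction formula for $\int_0^x t^{m}(1-t^2)^{-1/2}\,dt$ by induction on $\ell$, with base cases $\int_0^x t(1-t^2)^{-1/2}\,dt=1-\sqrt{1-x^2}$ for even $\nu$ and $\int_0^x(1-t^2)^{-1/2}\,dt=\arcsin x$ for odd $\nu$; this recovers the same $f_\nu^{(1)}$, $g_\nu^{(1)}$, $h_\nu^{(1)}$.
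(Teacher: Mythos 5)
Your proof is correct, but it takes a genuinely different route from the paper's. The paper disposes of this lemma essentially by citation: the integral $I_\nu^{(1)}(x)$ is looked up in the tables of Prudnikov--Brychkov--Marichev (identities 1.7.3.2 and 1.7.3.3 with $a=1$), and the only work done is fixing the constant of integration via $I_\nu^{(1)}(0)=0$, which is what produces \eqref{3.6}. You instead verify the closed form from scratch: agreement at $x=0$ is forced by $h_\nu^{(1)}=-g_\nu^{(1)}(0)$, and differentiation reduces everything to $(f_\nu^{(1)})'(x)=(\nu+1)x^\nu$ together with the polynomial identity $f_\nu^{(1)}(x)+(1-x^2)(g_\nu^{(1)})'(x)-x\,g_\nu^{(1)}(x)=0$, which you establish by coefficient comparison using the second identity in \eqref{2.6}. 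I checked your computations in both parities --- the interior cancellation (for even $\nu$ the combination $2(m+1)\binom{2m+2}{m+1}4^{-m-1}-(2m+1)\binom{2m}{m}4^{-m}=0$, for odd $\nu$ the recurrence $(2m+1)a_m=2m\,a_{m-1}$), the constant term $\tfrac{1}{4^{\ell+1}}\binom{2\ell+2}{\ell+1}$, and the top-degree terms --- and they are all right. It is worth noting that your differentiation scheme is exactly the structure the paper itself extracts later, in Section~\ref{sec:4a}: your two conditions are precisely \eqref{4a.2} and \eqref{4a.3}, which the paper uses (via the operator $\mathcal{D}$) to handle the second, third, and fourth powers of $\arcsin x$. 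So your argument makes the lemma self-contained and methodologically uniform with the rest of the paper, at the cost of the bookkeeping that the paper avoids by citing tables; your closing alternative (integration by parts plus a reduction formula for $\int_0^x t^m(1-t^2)^{-1/2}\,dt$) is also a sound, if only sketched, cross-check.
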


\begin{proof}
The integral in \eqref{3.1} can be found as indefinite integrals in \cite{PrE},
separately for even and for odd $\nu$. In fact, if we set $a=1$ in the 
identities 1.7.3.2 and 1.7.3.3 in \cite{PrE}, then we almost immediately get 
\eqref{3.2}--\eqref{3.5}. Since the two identities in question are indefinite
integrals, we need to take care of a constant of integration, say $c$, which 
is then determined by the fact that $I_{\nu}^{(1)}(0)=0$. This means that
$g_{\nu}^{(1)}(0)+c=0$, which in turn gives \eqref{3.6}.
\end{proof}

With Lemma~\ref{lem:3.1} we  now obtain the main result of this section.

\begin{theorem}\label{thm:3.2}
For all even $n\geq 0$ we have
\begin{equation}\label{3.7}
\sum_{k=0}^\infty\frac{\binom{2k}{k}}{2k+n+1}\bigl(\frac{x}{2}\bigr)^{2k}
=\frac{\binom{n}{n/2}}{(2x)^{n+1}}\left(2\arcsin{x}-\sqrt{1-x^2}\cdot\sum_{j=0}^{\frac{n-2}{2}}\frac{(2x)^{2j+1}}{(2j+1)\binom{2j}{j}}\right),
\end{equation}
and for all odd $n\geq 1$,
\begin{equation}\label{3.8}
\sum_{k=0}^\infty\frac{\binom{2k}{k}}{2k+n+1}\bigl(\frac{x}{2}\bigr)^{2k}
=\frac{\bigl(\frac{2}{x}\bigr)^{n+1}}{(n+1)\binom{n+1}{\frac{n+1}{2}}}
\left(1-\sqrt{1-x^2}\cdot\sum_{j=0}^{\frac{n-1}{2}}\binom{2j}{j}\bigl(\frac{x}{2}\bigr)^{2j}\right).
\end{equation}
\end{theorem}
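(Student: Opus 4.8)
The plan is to apply Corollary~\ref{cor:2.2} directly with $f(x)=\arcsin{x}$, which is odd, so $\delta=1$. From the first series in \eqref{1.1} we read off $c_k=a_{2k+1}=\binom{2k}{k}/4^k$, so the left-hand side of \eqref{2.5} becomes $\sum_{k=0}^\infty\frac{\binom{2k}{k}}{4^k}\frac{x^{2k+1+n}}{2k+1+n}$. The right-hand side of \eqref{2.5} is $x^n\arcsin{x}-n\,I_{n-1}^{(1)}(x)$, where $I_{n-1}^{(1)}$ is exactly the integral evaluated in Lemma~\ref{lem:3.1}. So the whole proof is a substitution-and-simplify: plug the closed form \eqref{3.2} for $\nu=n-1$ into \eqref{2.5}, then divide through by the appropriate power of $x$ (and by $4^k$-type factors) to match the stated normalization $\sum_k\frac{\binom{2k}{k}}{2k+n+1}(x/2)^{2k}$ on the left of \eqref{3.7}--\eqref{3.8}.

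First I would handle the parity bookkeeping. Since the integrand $t^{n-1}\arcsin t$ has $\nu=n-1$, the parity of $\nu$ is opposite to that of $n$: for \emph{even} $n$ we use the \emph{odd} case $\nu=2\ell+1$ with $\ell=(n-2)/2$, invoking \eqref{3.3}, \eqref{3.5}, and $h^{(1)}_{2\ell+1}=0$ from \eqref{3.6}; for \emph{odd} $n$ we use the \emph{even} case $\nu=2\ell$ with $\ell=(n-1)/2$, invoking \eqref{3.3}, \eqref{3.4}, and $h^{(1)}_{2\ell}=-4^\ell/((2\ell+1)\binom{2\ell}{\ell})$. In the even-$n$ case the $f^{(1)}_{2\ell+1}$ term contributes $x^{2\ell+2}\arcsin x=x^n\arcsin x$, which must cancel against the leading $x^n\arcsin x$ on the right of \eqref{2.5} once multiplied by $-n/(\nu+1)=-n/(2\ell+2)=-1$; this cancellation is the mechanism that collapses the $\arcsin x$ coefficient from $1$ down to the factor $2$ appearing in \eqref{3.7}. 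I expect the bulk of the work to be precisely this kind of coefficient tracking: verifying that the constant $-n/(\nu+1)$ combines with the prefactor $1/(\nu+1)$ inside \eqref{3.2} and with the binomial factors in \eqref{3.5} to produce the clean $\binom{n}{n/2}/(2x)^{n+1}$ and $(2/x)^{n+1}/((n+1)\binom{n+1}{(n+1)/2}))$ prefactors.

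The main obstacle will be the algebraic reconciliation of the binomial coefficients, and this is where the identities \eqref{2.6} earn their place. In the odd-$n$ case the factor $4^\ell/((2\ell+1)\binom{2\ell}{\ell})$ from \eqref{3.4} with $\ell=(n-1)/2$ must be converted into the target prefactor $(2/x)^{n+1}/((n+1)\binom{n+1}{(n+1)/2})$; setting $n+1=2m$, the second identity in \eqref{2.6} relating $\binom{2m-2}{m-1}$ to $\binom{2m}{m}$ is exactly what rewrites $\binom{2\ell}{\ell}=\binom{n-1}{(n-1)/2}$ in terms of $\binom{n+1}{(n+1)/2}$ together with the factor $(n+1)$. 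A parallel but slightly different manipulation handles the $-g^{(1)}_{2\ell}(0)=h^{(1)}_{2\ell}$ constant, which supplies the isolated ``$1$'' inside the parentheses of \eqref{3.8} after the $x$-normalization. I would therefore carry out the two cases in full but in parallel, matching term-by-term against \eqref{3.7} and \eqref{3.8}, and would flag the binomial rewriting via \eqref{2.6} as the one nonroutine step; the interchange of summation and integration needs no separate justification, as it is already covered by Theorem~\ref{thm:2.1} within $|x|<R=1$.
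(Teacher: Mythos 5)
Your proposal is correct and follows essentially the same route as the paper: apply Corollary~\ref{cor:2.2} with $f(x)=\arcsin{x}$, $\delta=1$, $c_k=\binom{2k}{k}4^{-k}$, invoke Lemma~\ref{lem:3.1} with $\nu=n-1$ (odd $\nu$ for even $n$, even $\nu$ for odd $n$), and reconcile the binomial prefactors via \eqref{2.6}; your identification of the $x^n\arcsin{x}$ cancellation and of $h^{(1)}_{2\ell}$ as the source of the ``$1$'' in \eqref{3.8} is exactly the content the paper compresses into ``almost immediately gives.'' The only cosmetic omission is the case $n=0$ of \eqref{3.7}, which falls outside the $\ell=(n-2)/2$ bookkeeping and is disposed of separately (as in the paper) since it is just the first identity in \eqref{1.1}.
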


\begin{proof}
We use Corollary~\ref{cor:2.2} with $f(x)=\arcsin{x}$. Then by \eqref{1.1} we
have $\delta=1$ and $c_k=\binom{2k}{k}4^{-k}$. When $n=0$, then \eqref{3.7} is
just the first identity in \eqref{1.1}. When $n\geq 2$ is even, we set
$n=2\ell+2$, so that $\nu=2\ell+1$. Then \eqref{2.5} combined with 
Lemma~\ref{lem:3.1} almost immediately gives \eqref{3.7}, where only 
\eqref{2.6} also needs to be used.

Similarly, when $n\geq 1$ is odd, we set $n=2\ell+1$, so that we take 
$\nu=2\ell$ in Lemma~\ref{lem:3.1}. Again, upon using \eqref{2.6}, the identity
\eqref{3.8} follows immediately from \eqref{2.5}, which completes the proof.
\end{proof}

\begin{corollary}\label{cor:3.3}
For all integers $n\geq 0$ we have
\begin{equation}\label{3.9}
\sum_{k=0}^\infty\frac{\binom{2k}{k}}{4^k(2k+n+1)}
=\begin{cases}
\binom{n}{n/2}\frac{\pi}{2^{n+1}} & \hbox{if $n$ is even},\\
\frac{2^{n-1}}{n\binom{n-1}{(n-1)/2}} & \hbox{if $n$ is odd}.
\end{cases}
\end{equation}
More concisely, for all $n\geq 0$ we have
\begin{equation}\label{3.10}
\sum_{k=0}^\infty\frac{\binom{2k}{k}}{4^k(2k+n+1)}
=\binom{n}{n/2}\frac{\pi}{2^{n+1}},
\end{equation}
where $\binom{n}{n/2}$ is considered a generalized binomial coefficient in 
the case of odd $n$.
\end{corollary}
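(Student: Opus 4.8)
The plan is to specialize Theorem~\ref{thm:3.2} to $x=1$. The key observation is that at $x=1$ the factor $\sqrt{1-x^2}$ vanishes, so the finite sums on the right-hand sides of \eqref{3.7} and \eqref{3.8} disappear entirely, leaving a single clean term. Since the series on the left-hand side converges at $x=1$ (the summand behaves like $k^{-3/2}$ by the asymptotic $\binom{2k}{k}\sim 4^k/\sqrt{\pi k}$), Abel's limit theorem---already invoked in the remark following Corollary~\ref{cor:2.2}---justifies substituting $x=1$ directly.

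First, for even $n$, setting $x=1$ in \eqref{3.7} and using $\arcsin 1 = \tf{\pi}{2}$ and $(2x)^{n+1}=2^{n+1}$ gives at once
\[
\sum_{k=0}^\infty\frac{\binom{2k}{k}}{4^k(2k+n+1)}
=\frac{\binom{n}{n/2}}{2^{n+1}}\cdot 2\cdot\frac{\pi}{2}
=\binom{n}{n/2}\frac{\pi}{2^{n+1}},
\]
which is the even case of \eqref{3.9}. For odd $n$, setting $x=1$ in \eqref{3.8} similarly yields $\frac{2^{n+1}}{(n+1)\binom{n+1}{(n+1)/2}}$. To bring this into the form stated in \eqref{3.9}, I would write $n=2m+1$ and apply the second identity in \eqref{2.6} (with its $n$ replaced by $m+1$), namely $\binom{2m}{m}=\frac{m+1}{2(2m+1)}\binom{2m+2}{m+1}$, to re-express $\binom{n+1}{(n+1)/2}=\binom{2m+2}{m+1}$ through $\binom{2m}{m}=\binom{n-1}{(n-1)/2}$. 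A short simplification then collapses $\frac{2^{n+1}}{(n+1)\binom{n+1}{(n+1)/2}}$ to $\frac{2^{n-1}}{n\binom{n-1}{(n-1)/2}}$, completing \eqref{3.9}.

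For the concise statement \eqref{3.10}, I would verify that the odd-case value equals $\binom{n}{n/2}\pi/2^{n+1}$ when $\binom{n}{n/2}$ is read as the generalized (Gamma-function) binomial coefficient. Writing $n=2m+1$ and using the half-integer value $\Gamma(m+\tf32)=\frac{(2m+2)!}{4^{m+1}(m+1)!}\sqrt{\pi}$ in
\[
\binom{2m+1}{m+1/2}=\frac{\Gamma(2m+2)}{\Gamma(m+\tf32)^2},
\]
a direct factorial computation reduces the right-hand side to $\frac{4^{2m+1}}{(2m+1)\binom{2m}{m}\,\pi}$; multiplying by $\pi/2^{n+1}=\pi/4^{m+1}$ recovers exactly $\frac{2^{n-1}}{n\binom{n-1}{(n-1)/2}}$, so \eqref{3.10} holds uniformly in $n$.

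The substantive content lies entirely in the two reconciliations for odd $n$: the algebraic one via \eqref{2.6}, and the analytic one via the half-integer Gamma value. The specialization itself is immediate once one notices the vanishing of $\sqrt{1-x^2}$, so the only real care needed is the bookkeeping of factorials and the correct form of $\Gamma(m+\tf32)$. I expect the Gamma-function identity to be the most error-prone step, rather than a genuine obstacle.
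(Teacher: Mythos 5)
Your proposal is correct and follows essentially the same route as the paper: specialize Theorem~\ref{thm:3.2} at $x=1$ (where $\sqrt{1-x^2}$ kills the finite sums), reconcile the odd case of \eqref{3.9} via \eqref{2.6}, and prove \eqref{3.10} by writing the generalized binomial coefficient in terms of Gamma functions, exactly as in \eqref{3.11}. Your explicit half-integer value $\Gamma(m+\tfrac{3}{2})=\frac{(2m+2)!}{4^{m+1}(m+1)!}\sqrt{\pi}$ is just a prepackaged form of the Legendre duplication step the paper carries out, so the two Gamma computations are equivalent.
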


\begin{proof}[Proof of Corollary~\ref{cor:3.3}]
\eqref{3.9} follows immediately from Theorem~\ref{thm:3.2} with $x=1$ and the
fact that $\arcsin(1)=\pi/2$. It remains to prove \eqref{3.10} for odd 
$n=2\ell+1$. To do so, we write the relevant binomial
coefficients in terms of the gamma function:
\begin{equation}\label{3.11}
\binom{n}{n/2}=\frac{\Gamma(2\ell+2)}{\Gamma(\ell+\tfrac{3}{2})^2},\qquad
\binom{n-1}{\frac{n-1}{2}}=\frac{\Gamma(2\ell+1)}{\Gamma(\ell+1)^2}.
\end{equation}
We now use Legendre's duplication formula twice (see, e.g., 
\cite[eq.~5.5.5]{DLMF}):
\begin{align*}
\Gamma(2\ell+1)\sqrt{\pi}
&=2^{2\ell}\Gamma(\ell+\tfrac{1}{2})\Gamma(\ell+1),\\
\Gamma(2\ell+2)\sqrt{\pi}
&=2^{2\ell+1}\Gamma(\ell+1)\Gamma(\ell+\tfrac{3}{2}).
\end{align*}
Taking the product of these two identities and using the fact that
$(\ell+\tfrac{1}{2})\Gamma(\ell+\tfrac{1}{2})=\Gamma(\ell+\tfrac{3}{2})$, 
we get
\[
\Gamma(2\ell+1)\Gamma(2\ell+2)\pi
=\frac{2^{4\ell+2}}{2\ell+1}\Gamma(\ell+1)^2\Gamma(\ell+\tfrac{3}{2})^2.
\]
Combining this last identity with both identities in \eqref{3.11}, we see that
the second part of \eqref{3.9} agrees with \eqref{3.10} for odd $n$, which 
completes the proof.
\end{proof}

The next obvious special case of Theorem~\ref{thm:3.2}, after $x=1$, would be
$x=1/2$ with $\arcsin{x}=\pi/6$. Rather than stating an analogue of
Corollary~\ref{cor:3.3}, we give the first few explicit cases of \eqref{3.7}
for even $n=0, 2,\ldots, 10$.

\begin{corollary}\label{cor:3.3a}
The following identities hold:
\begin{center}
\begin{tabular}{l|l}
$\displaystyle{\sum_{k=0}^\infty\frac{\binom{2k}{k}}{16^k(2k+1)}
= \frac{\pi}{3}}$, &
$\displaystyle{\sum_{k=0}^\infty\frac{\binom{2k}{k}}{16^k(2k+7)}
= \frac{20\pi}{3}-12\sqrt{3}}$,\\
$\displaystyle{\sum_{k=0}^\infty\frac{\binom{2k}{k}}{16^k(2k+3)}
= \frac{2\pi}{3}-\sqrt{3}}$, &
$\displaystyle{\sum_{k=0}^\infty\frac{\binom{2k}{k}}{16^k(2k+9)}
= \frac{70\pi}{3}-\frac{169\sqrt{3}}{4}}$,\\
$\displaystyle{\sum_{k=0}^\infty\frac{\binom{2k}{k}}{16^k(2k+5)}
= 2\pi-\frac{7\sqrt{3}}{2}}$, &
$\displaystyle{\sum_{k=0}^\infty\frac{\binom{2k}{k}}{16^k(2k+11)}
= 84\pi-\frac{1523\sqrt{3}}{10}}$.\\
\end{tabular}
\end{center}
\end{corollary}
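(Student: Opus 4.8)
The plan is to evaluate the general series identity \eqref{3.7} at the specific value $x=1/2$, where $\arcsin(1/2)=\pi/6$ and $\sqrt{1-x^2}=\sqrt{3}/2$, for each even $n$ from $0$ through $10$. Since Theorem~\ref{thm:3.2} already provides the closed form for the entire infinite series, the task reduces to a finite computation: substituting the numerical values and simplifying the resulting rational and algebraic expressions. The left-hand side of \eqref{3.7} becomes $\sum_{k=0}^\infty\binom{2k}{k}/(16^k(2k+n+1))$, which matches the form displayed in the corollary after noting that $(x/2)^{2k}=(1/4)^{2k}=16^{-k}$.

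First I would record the constant factors that appear when $x=1/2$: the prefactor $\binom{n}{n/2}/(2x)^{n+1}=\binom{n}{n/2}$ since $(2x)^{n+1}=1$, the term $2\arcsin(1/2)=\pi/3$, and the radical factor $\sqrt{1-x^2}=\sqrt{3}/2$. With $2x=1$, the finite sum inside the parentheses simplifies to $\sum_{j=0}^{(n-2)/2} 1/((2j+1)\binom{2j}{j})$, so \eqref{3.7} collapses to the clean expression $\binom{n}{n/2}\bigl(\pi/3-(\sqrt{3}/2)\sum_{j=0}^{(n-2)/2}1/((2j+1)\binom{2j}{j})\bigr)$. I would then tabulate this for each value $n=0,2,4,6,8,10$, computing the binomial coefficients $\binom{n}{n/2}=1,2,6,20,70,252$ and the corresponding partial sums of $1/((2j+1)\binom{2j}{j})$ term by term.

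The computation for each case is entirely mechanical: for instance, at $n=4$ one has $\binom{4}{2}=6$ and the inner sum runs over $j=0,1$, giving $1+1/6=7/6$, so the result is $6(\pi/3-(\sqrt{3}/2)(7/6))=2\pi-7\sqrt{3}/2$, matching the stated entry. The remaining cases follow the same pattern, and the only point requiring care is correctly accumulating the fractions in the inner sum and keeping track of the common denominators (which produce the $4$ and $10$ appearing in the last two entries).

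The main obstacle, such as it is, lies not in any conceptual difficulty but in the bookkeeping: ensuring that each of the six arithmetic simplifications is carried out without error, particularly the rational combinations that yield denominators like $169\sqrt{3}/4$ and $1523\sqrt{3}/10$. Since Theorem~\ref{thm:3.2} has already done the analytic work, I expect the proof to consist of a single sentence observing that the identities follow by setting $x=1/2$ in \eqref{3.7}, together with the routine evaluations of $\arcsin(1/2)=\pi/6$ and the finite sums; no new ideas are needed.
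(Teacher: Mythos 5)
Your proposal is correct and is exactly the paper's (implicit) argument: the corollary is presented as the specialization $x=1/2$ of \eqref{3.7} for even $n=0,2,\ldots,10$, using $\arcsin(1/2)=\pi/6$, $\sqrt{1-x^2}=\sqrt{3}/2$, and $(2x)^{n+1}=1$, after which only the finite sums $\sum_{j=0}^{(n-2)/2}1/\bigl((2j+1)\binom{2j}{j}\bigr)$ need to be evaluated. Your sample computation at $n=4$ (and the structure of the reduction) matches the stated entries, so nothing further is needed.
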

 
The first one of these identities, at least, is well known and can be found,
for instance, in \cite[p.~451]{Le}.

Another consequence of Theorem~\ref{thm:3.2} is a pair of identities connecting
different power series.

\begin{corollary}\label{cor:3.4}
For all $x\in{\mathbb R}$ with $|x|<1$ and integers $n\geq 0$ we have
\begin{equation}\label{3.12}
\sum_{k=0}^\infty\frac{\binom{2k}{k}x^{2k+2n+1}}{4^k(2k+2n+1)}
=\frac{\binom{2n}{n}\sqrt{1-x^2}}{2^{2n+1}}\cdot
\sum_{j=n}^{\infty}\frac{(2x)^{2j+1}}{\binom{2j}{j}(2j+1)},
\end{equation}
and for $n\geq 1$,
\begin{equation}\label{3.13}
2n\binom{2n}{n}\sum_{k=0}^\infty\frac{\binom{2k}{k}}{2k+2n+1}
\big(\frac{x}{2}\big)^{2k+2n+1}
=\sum_{j=n}^{\infty}\binom{2j}{j}\big(\frac{x}{2}\big)^{2j}.
\end{equation}
\end{corollary}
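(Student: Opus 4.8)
The plan is to read both identities off Theorem~\ref{thm:3.2} by pulling out the appropriate power of $x$ and then converting the ``arcsine minus partial sum'' (respectively ``$1$ minus partial sum'') on the right-hand side into a single tail sum by means of a companion generating-function identity. Concretely, I expect \eqref{3.12} to come from the even case \eqref{3.7} with parameter $2n$, and \eqref{3.13} from the odd case \eqref{3.8} with parameter $2n-1$; in both cases the prefactor in Theorem~\ref{thm:3.2} is designed to cancel against the power of $x$ I factor out.

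For \eqref{3.12}, I would first use $\binom{2k}{k}x^{2k}/4^k=\binom{2k}{k}(x/2)^{2k}$ to write the left-hand side as $x^{2n+1}$ times $\sum_{k\ge0}\binom{2k}{k}(x/2)^{2k}/(2k+2n+1)$. Applying \eqref{3.7} with $n$ replaced by $2n$ (its partial sum then runs to $j=n-1$ and its prefactor becomes $\binom{2n}{n}/(2x)^{2n+1}$), the ratio $x^{2n+1}/(2x)^{2n+1}=2^{-(2n+1)}$ collapses the prefactor to $\binom{2n}{n}/2^{2n+1}$, leaving
\[
\frac{\binom{2n}{n}}{2^{2n+1}}\left(2\arcsin x-\sqrt{1-x^2}\sum_{j=0}^{n-1}\frac{(2x)^{2j+1}}{(2j+1)\binom{2j}{j}}\right).
\]
The crucial input is then the companion series $2\arcsin x=\sqrt{1-x^2}\sum_{j\ge0}(2x)^{2j+1}/\bigl((2j+1)\binom{2j}{j}\bigr)$: substituting it lets me merge $2\arcsin x$ with the subtracted partial sum, and what survives is exactly the tail $\sum_{j\ge n}$ appearing on the right of \eqref{3.12}.

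For \eqref{3.13} I would run the analogous scheme through the odd case \eqref{3.8} with parameter $2n-1$, whose prefactor is $(2/x)^{2n}/\bigl(2n\binom{2n}{n}\bigr)$ and whose partial sum runs to $j=n-1$. Multiplying by $2n\binom{2n}{n}(x/2)^{2n}$ cancels the prefactor, turning the right-hand side into $1-\sqrt{1-x^2}\sum_{j=0}^{n-1}\binom{2j}{j}(x/2)^{2j}$. Here the companion identity I need is the elementary central-binomial generating function $\sum_{j\ge0}\binom{2j}{j}(x/2)^{2j}=(1-x^2)^{-1/2}$, which rewrites the constant $1$ as $\sqrt{1-x^2}$ times the full series; subtracting the partial sum again leaves the tail $\sum_{j\ge n}\binom{2j}{j}(x/2)^{2j}$, times the factor $\sqrt{1-x^2}$. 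I would track the $\sqrt{1-x^2}$ factor and the summation exponents especially carefully at this step, since that is precisely where an index or parity slip is easiest to introduce.

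The only genuinely non-routine ingredient is the companion arcsine series used in \eqref{3.12}; everything else is bookkeeping with \eqref{2.6} and powers of $2$. I expect to justify that series in one line from the second identity in \eqref{1.1}: differentiating $(\arcsin x)^2=\tfrac{1}{2}\sum_{k\ge1}(2x)^{2k}/(\binom{2k}{k}k^2)$ gives $2\arcsin x/\sqrt{1-x^2}=2\sum_{k\ge1}(2x)^{2k-1}/(\binom{2k}{k}k)$, and the relation $\binom{2k}{k}=\tfrac{2(2k-1)}{k}\binom{2k-2}{k-1}$ from \eqref{2.6} (after the shift $k\mapsto j+1$) converts this to $\sum_{j\ge0}(2x)^{2j+1}/\bigl((2j+1)\binom{2j}{j}\bigr)$. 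The main obstacle is thus arithmetic rather than conceptual: matching the constants $\binom{2n}{n}$, the powers of $2$, the $\sqrt{1-x^2}$ factor, and the summation ranges on the two sides exactly, so that the prefactors cancel cleanly and the partial sums line up to produce the stated tails.
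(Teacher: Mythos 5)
Your proposal follows exactly the paper's route: both identities are read off Theorem~\ref{thm:3.2}, and the ``companion'' series you invoke are precisely the paper's tools --- your arcsine series is the paper's \eqref{3.15}, obtained the same way (differentiate the second series in \eqref{1.1}, then shift the index using the second identity in \eqref{2.6}), and the central-binomial generating function is the paper's \eqref{3.16}. Your treatment of \eqref{3.12} is identical to the paper's and correct in every detail.

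For the second identity, however, your careful bookkeeping leads to a conclusion that differs from the printed \eqref{3.13} --- and you are right, the paper is wrong here. Carrying out your steps, one gets
\[
2n\binom{2n}{n}\sum_{k=0}^\infty\frac{\binom{2k}{k}}{2k+2n}\Bigl(\frac{x}{2}\Bigr)^{2k+2n}
=\sqrt{1-x^2}\,\sum_{j=n}^{\infty}\binom{2j}{j}\Bigl(\frac{x}{2}\Bigr)^{2j},
\]
i.e., the factor $\sqrt{1-x^2}$ survives on the right and the left-hand exponents are $2k+2n$, not $2k+2n+1$. The printed \eqref{3.13} cannot be correct as it stands: its left side contains only odd powers of $x$ while its right side contains only even powers, so the two sides cannot agree as power series. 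The source of the error is visible in the paper's own proof: in the displayed simplification of the parenthetical expression of \eqref{3.8}, the quantity $1-\sqrt{1-x^2}\,\bigl(\tfrac{1}{\sqrt{1-x^2}}-\sum_{j\geq(n+1)/2}\binom{2j}{j}(x/2)^{2j}\bigr)$ is evaluated as $\sum_{j\geq(n+1)/2}\binom{2j}{j}(x/2)^{2j}$, dropping the factor $\sqrt{1-x^2}$ that should multiply the tail (the printed exponent $2k+2n+1$ on the left of \eqref{3.13} is a further slip, since it does not follow even from the paper's own flawed intermediate step). So your stated intention to ``track the $\sqrt{1-x^2}$ factor and the summation exponents especially carefully at this step'' is exactly what separates your correct conclusion from the paper's printed identity; your proof should be regarded as establishing the corrected form displayed above rather than \eqref{3.13} verbatim.
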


\begin{proof}
With the aim of dealing with the right-most sum in \eqref{3.7}, we differentiate
the second identity in \eqref{1.1}, obtaining
\begin{equation}\label{3.14}
\frac{\arcsin{x}}{\sqrt{1-x^2}}
=\sum_{k=1}^\infty\frac{(2x)^{2k-1}}{\binom{2k}{k}k}.
\end{equation}
This identity is also mentioned in \cite{Le}, along with a new proof. Using the
second identity in \eqref{2.6} and setting $k=j+1$ in \eqref{3.14},
we get
\begin{equation}\label{3.15}
\frac{2\arcsin{x}}{\sqrt{1-x^2}}
=\sum_{j=0}^\infty\frac{(2x)^{2j+1}}{\binom{2j}{j}(2j+1)}.
\end{equation}
With this identity, the expression in large parentheses in \eqref{3.7} becomes
\begin{align*}
2\arcsin{x}-\sqrt{1-x^2}&\left(\frac{2\arcsin{x}}{\sqrt{1-x^2}}-\sum_{j=n/2}^{\infty}\frac{(2x)^{2j+1}}{\binom{2j}{j}(2j+1)}\right) \\
&\qquad=\sqrt{1-x^2}\sum_{j=n/2}^{\infty}\frac{(2x)^{2j+1}}{\binom{2j}{j}(2j+1)}.
\end{align*}
Substituting this into \eqref{3.7}, multiplying both sides by $x^{n+1}$, and
finally replacing the (even) $n$ by $2n$, we get \eqref{3.12}.

To prove \eqref{3.13}, we differentiate the left identity in \eqref{1.1},
obtaining
\begin{equation}\label{3.16}
\frac{1}{\sqrt{1-x^2}} 
=\sum_{j=0}^{\infty}\binom{2j}{j}\big(\frac{x}{2}\big)^{2j}.
\end{equation}
This is also a well-known identity which was used by Lehmer \cite{Le}. With 
\eqref{3.16}, the expression in large parentheses in \eqref{3.8} becomes
\[
1-\sqrt{1-x^2}\left(\frac{1}{\sqrt{1-x^2}}-\sum_{j=\frac{n+1}{2}}^{\infty}\binom{2j}{j}\big(\frac{x}{2}\big)^{2j}\right)
=\sum_{j=\frac{n+1}{2}}^{\infty}\binom{2j}{j}\big(\frac{x}{2}\big)^{2j}.
\]
Substituting this into \eqref{3.8} and replacing the (odd) $n$ by $2n-1$ for 
$n\geq 1$, we immediately get \eqref{3.13}.
\end{proof}

\section{A differential operator}\label{sec:4a}

Before continuing with higher powers of $\arcsin{x}$, we derive some results
concerning a differential operator, which will be useful in the following 
sections. To see how this enters, we differentiate the right-hand sides of
\eqref{3.1} and \eqref{3.2}, obtaining
\begin{align}
({\nu}+1)x^{\nu}\arcsin{x}
&= f_{\nu}'(x)\arcsin{x}+\frac{f_{\nu}(x)}{\sqrt{1-x^2}}\label{4a.1}\\
&\quad+g_{\nu}'(x)\sqrt{1-x^2}-\frac{xg_{\nu}(x)}{\sqrt{1-x^2}},\nonumber
\end{align}
where for greater ease of notation we deleted the superscripts. For \eqref{4a.1}
to hold for all $x$, we require both 
\begin{equation}\label{4a.2}
f_{\nu}'(x) = (\nu+1)x^{\nu}
\end{equation}
and
\begin{equation}\label{4a.3}
xg_{\nu}(x)+(x^2-1)g_{\nu}'(x) = f_{\nu}(x).
\end{equation}
This last identity gives rise to the following definition: For a function 
$F(x)$, differentiable in a neighbourhood of 0, we define the operator 
$\mathcal{D}$ by
\begin{equation}\label{4a.4}
\big(\mathcal{D}F\big)(x) := xF(x)+(x^2-1)\frac{d}{dx}F(x).
\end{equation}
The following properties, with $c$ an arbitrary constant, are easy to verify:
\begin{align}
\big(\mathcal{D}c\big)(x) &= cx,\label{4a.5}\\
\big(\mathcal{D}\frac{c}{\sqrt{x^2-1}}\big)(x) &= 0,\label{4a.6}\\
\big(\mathcal{D}\frac{\log(x+\sqrt{x^2-1})}{\sqrt{x^2-1}}\big)(x) &= 1.\label{4a.7}
\end{align}
Since $\mathcal{D}$ is a linear differential operator of order 1, the identity
\eqref{4a.6} guarantees that solutions to $\big(\mathcal{D}F\big)(x)=f(x)$,
for an appropriate given function $f$ (here we will only be interested in
polynomials) are unique up to adding $c/\sqrt{x^2-1}$. We can therefore 
consider the inverse operator $\mathcal{D}^{-1}$. It satisfies the 
following property, which will be useful in later sections.

\begin{lemma}\label{lem:4a.1}
For all integers $\ell\geq 1$ we have, up to addition by $c/\sqrt{x^2-1}$ on
the right,
\begin{align}
\big(\mathcal{D}^{-1}x^{2\ell+1}\big)(x) &= g_{2\ell}^{(1)}(x),\label{4a.8}\\
\big(\mathcal{D}^{-1}x^{2\ell+2}\big)(x) &= g_{2\ell+1}^{(1)}(x)
+\frac{1}{4^{\ell+1}}\binom{2\ell+2}{\ell+1}\big(\mathcal{D}^{-1}1\big)(x),\label{4a.9}
\end{align}
where $g_{2\ell}^{(1)}(x)$ and $g_{2\ell+1}^{(1)}(x)$ are as given in
\eqref{3.4} and \eqref{3.5}.
\end{lemma}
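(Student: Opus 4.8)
The plan is to verify both identities by applying the operator $\mathcal{D}$ to the claimed right-hand sides and invoking the (near-)injectivity of $\mathcal{D}$. Since the kernel of $\mathcal{D}$ is spanned by $1/\sqrt{x^2-1}$ by property \eqref{4a.6}, any two $\mathcal{D}$-preimages of a given function differ only by $c/\sqrt{x^2-1}$; hence it suffices to check that applying $\mathcal{D}$ to the asserted right-hand side reproduces the monomial on the left. The essential ingredient is already recorded in \eqref{4a.3}, which states precisely that $(\mathcal{D}g_\nu^{(1)})(x)=f_\nu^{(1)}(x)$ for the functions produced in Lemma~\ref{lem:3.1}. Thus the whole proof reduces to combining \eqref{4a.3} with the explicit forms of $f_\nu^{(1)}$ in \eqref{3.3} and the linearity of $\mathcal{D}$.

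For \eqref{4a.8}, I would take $\nu=2\ell$ in \eqref{4a.3} and use the first formula in \eqref{3.3}, namely $f_{2\ell}^{(1)}(x)=x^{2\ell+1}$. This yields $(\mathcal{D}g_{2\ell}^{(1)})(x)=x^{2\ell+1}$, which is exactly the statement that $g_{2\ell}^{(1)}$ is a $\mathcal{D}$-preimage of $x^{2\ell+1}$, establishing \eqref{4a.8} up to the permitted additive term.

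For \eqref{4a.9}, I would apply $\mathcal{D}$ to the right-hand side and use its linearity. The first summand contributes $(\mathcal{D}g_{2\ell+1}^{(1)})(x)=f_{2\ell+1}^{(1)}(x)$ by \eqref{4a.3} with $\nu=2\ell+1$, while the second summand contributes $\frac{1}{4^{\ell+1}}\binom{2\ell+2}{\ell+1}\big(\mathcal{D}\mathcal{D}^{-1}1\big)(x)=\frac{1}{4^{\ell+1}}\binom{2\ell+2}{\ell+1}$. Substituting the second formula in \eqref{3.3}, namely $f_{2\ell+1}^{(1)}(x)=x^{2\ell+2}-\frac{1}{4^{\ell+1}}\binom{2\ell+2}{\ell+1}$, the two constant terms cancel and the total collapses to $x^{2\ell+2}$, which is \eqref{4a.9}.

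No step presents a genuine obstacle: once \eqref{4a.3} is in hand, each identity is a one-line verification. The only point requiring a little care is the bookkeeping of the additive constant. The extra term $\frac{1}{4^{\ell+1}}\binom{2\ell+2}{\ell+1}\big(\mathcal{D}^{-1}1\big)(x)$ in \eqref{4a.9} is forced precisely because $f_{2\ell+1}^{(1)}$, unlike $f_{2\ell}^{(1)}$, carries a nonzero constant term; its presence is exactly what makes the constant parts cancel so that $\mathcal{D}$ of the right-hand side is the pure monomial $x^{2\ell+2}$.
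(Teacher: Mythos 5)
Your proof is correct and takes essentially the same route as the paper, which likewise deduces the lemma directly from \eqref{4a.3} and \eqref{3.3} together with the linearity of $\mathcal{D}^{-1}$ (and the kernel description \eqref{4a.6} for the ambiguity $c/\sqrt{x^2-1}$). Your write-up merely makes the one-line argument explicit by applying $\mathcal{D}$ to the claimed right-hand sides and observing the cancellation of the constant term in $f_{2\ell+1}^{(1)}$.
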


\begin{proof} 
This follows immediately from \eqref{4a.3} and \eqref{3.3}, along with the
linearity of the inverse operator $\mathcal{D}^{-1}$.
\end{proof}

\section{The case $f(x)=(\arcsin{x})^2$}\label{sec:4}

In analogy to Section~\ref{sec:3}, we need to evaluate
\begin{equation}\label{4.1}
I_{\nu}^{(2)}(x) := \int_0^x t^{\nu}(\arcsin{t})^2\,dt.
\end{equation}
This is done as follows.

\begin{lemma}\label{lem:4.1}
For integers ${\nu}\geq 0$ we have
\begin{equation}\label{4.2}
I_{\nu}^{(2)}(x) = \frac{1}{{\nu}+1}
\left(f_{\nu}^{(2)}(x)(\arcsin{x})^2+g_{\nu}^{(2)}(x)\sqrt{1-x^2}\arcsin{x}
+h_{\nu}^{(2)}(x)\right),
\end{equation}
where
\begin{equation}\label{4.3}
f_{\nu}^{(2)}(x)=f_{\nu}^{(1)}(x),\qquad
g_{\nu}^{(2)}(x)=2g_{\nu}^{(1)}(x),
\end{equation}
and for integers $\ell\geq 0$,
\begin{align}
h_{2\ell}^{(2)}(x) &=-\frac{4^{\ell+1}}{(2\ell+1)\binom{2\ell}{\ell}}
\sum_{j=0}^\ell\binom{2j}{j}\frac{\left(\frac{x}{2}\right)^{2j+1}}{2j+1},\label{4.6}\\
h_{2\ell+1}^{(2)}(x) 
&= -\frac{1}{2\cdot 4^{\ell+1}}\binom{2\ell+2}{\ell+1}
\sum_{j=1}^{\ell+1}\frac{(2x)^{2j}}{\binom{2j}{j}j^2}.\label{4.7}
\end{align}
\end{lemma}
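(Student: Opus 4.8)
The plan is to avoid computing the integral \eqref{4.1} directly and instead to verify the closed form \eqref{4.2} by differentiation together with the initial condition $I_\nu^{(2)}(0)=0$, exploiting the operator $\mathcal{D}$ introduced in Section~\ref{sec:4a}. Write the right-hand side of \eqref{4.2}, multiplied by $\nu+1$, as
\[
F_\nu(x):=f_\nu^{(2)}(x)(\arcsin x)^2+g_\nu^{(2)}(x)\sqrt{1-x^2}\,\arcsin x+h_\nu^{(2)}(x),
\]
and differentiate. Since $\tfrac{d}{dx}I_\nu^{(2)}(x)=x^\nu(\arcsin x)^2$, proving the lemma amounts to checking that $F_\nu'(x)=(\nu+1)x^\nu(\arcsin x)^2$ and $F_\nu(0)=0$.

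First I would differentiate $F_\nu$ and group the resulting terms by their power of $\arcsin x$, equating the coefficient functions of $(\arcsin x)^2$, $\arcsin x$, and the purely algebraic part separately; this is legitimate because $1,\arcsin x,(\arcsin x)^2$ are linearly independent over the functions algebraic in $x$. The coefficient of $(\arcsin x)^2$ is $\tfrac{d}{dx}f_\nu^{(2)}(x)$, and matching it to $(\nu+1)x^\nu$ is exactly \eqref{4a.2}, which holds since $f_\nu^{(2)}=f_\nu^{(1)}$. The coefficient of $\arcsin x$ is $\tfrac{2f_\nu^{(2)}}{\sqrt{1-x^2}}+\sqrt{1-x^2}\,\tfrac{d}{dx}g_\nu^{(2)}-\tfrac{x g_\nu^{(2)}}{\sqrt{1-x^2}}$; after clearing $\sqrt{1-x^2}$ this becomes $2f_\nu^{(2)}-\bigl(x g_\nu^{(2)}+(x^2-1)\tfrac{d}{dx}g_\nu^{(2)}\bigr)=2f_\nu^{(2)}-(\mathcal{D}g_\nu^{(2)})(x)$. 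Setting $g_\nu^{(2)}=2g_\nu^{(1)}$ and using \eqref{4a.3} in the form $(\mathcal{D}g_\nu^{(1)})(x)=f_\nu^{(1)}=f_\nu^{(2)}$ makes this vanish by linearity of $\mathcal{D}$, which is precisely why the choices in \eqref{4.3} are forced. The remaining algebraic terms collapse to $g_\nu^{(2)}(x)+\tfrac{d}{dx}h_\nu^{(2)}(x)$, so the only surviving requirement is
\[
\tfrac{d}{dx}h_\nu^{(2)}(x)=-g_\nu^{(2)}(x)=-2g_\nu^{(1)}(x),\qquad h_\nu^{(2)}(0)=0,
\]
the boundary value coming from $F_\nu(0)=0$, since the first two summands of $F_\nu$ vanish at $0$.

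It then remains to integrate, obtaining $h_\nu^{(2)}(x)=-2\int_0^x g_\nu^{(1)}(t)\,dt$ from the explicit forms \eqref{3.4} and \eqref{3.5} by integrating term by term. For even $\nu=2\ell$ this is immediate: integrating \eqref{3.4} and rewriting $x^{2j+1}/4^j=2(x/2)^{2j+1}$ reproduces \eqref{4.6}. The one genuinely nonroutine step is the odd case $\nu=2\ell+1$: integrating \eqref{3.5} produces a sum with denominators $(2j+1)(2j+2)\binom{2j}{j}$, and after the index shift $j\mapsto j-1$ one must invoke the second identity in \eqref{2.6}, namely $\binom{2j-2}{j-1}=\tfrac{j}{2(2j-1)}\binom{2j}{j}$, to simplify $(2j-1)(2j)\binom{2j-2}{j-1}=j^2\binom{2j}{j}$ and thereby recast each term into the clean shape $(2x)^{2j}/\bigl(j^2\binom{2j}{j}\bigr)$, yielding \eqref{4.7}. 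I expect this binomial simplification, rather than the differentiation, to be the main point requiring care.
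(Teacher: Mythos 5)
Your proposal is correct and is essentially the paper's own argument run in the verification direction: the paper treats $f_\nu^{(2)}$, $g_\nu^{(2)}$, $h_\nu^{(2)}$ as unknowns, differentiates the conjectured form, and equates coefficients of $(\arcsin{x})^2$, $\arcsin{x}$, and the algebraic part to obtain exactly your three conditions (its \eqref{4.9} and \eqref{4.10}), then solves them using Lemma~\ref{lem:4a.1} and \eqref{4a.7} to pin down the constants, whereas you substitute the stated formulas and check the same three conditions directly via \eqref{4a.2} and \eqref{4a.3}. The only step where you supply detail that the paper compresses is the last one: where the paper asserts that integrating $-g_\nu^{(2)}$ ``immediately leads to'' \eqref{4.6} and \eqref{4.7}, you correctly carry out the odd-case term-by-term integration, the index shift, and the simplification $(2j-1)(2j)\binom{2j-2}{j-1}=j^2\binom{2j}{j}$ coming from \eqref{2.6}.
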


\begin{proof}
Computations led us to conjecture that $I_{\nu}^{(2)}(x)$ is of the form 
\eqref{4.2}, where $f_{\nu}^{(2)}(x)$, $g_{\nu}^{(2)}(x)$, and 
$h_{\nu}^{(2)}(x)$ are
polynomials with rational coefficients. We first consider these polynomials
as unknowns, and differentiating \eqref{4.1} and \eqref{4.2}, we get
\begin{align}
&({\nu}+1)x^{\nu}(\arcsin{x})^2 = \frac{d}{dx}f_{\nu}^{(2)}(x)(\arcsin{x})^2
+2f_{\nu}^{(2)}(x)\frac{\arcsin{x}}{\sqrt{1-x^2}}\label{4.8}\\
&\quad+g_{\nu}^{(2)}(x)\left(1-x\frac{\arcsin{x}}{\sqrt{1-x^2}}\right)
+\frac{d}{dx}g_{\nu}^{(2)}(x)(1-x^2)\frac{\arcsin{x}}{\sqrt{1-x^2}}
+\frac{d}{dx}h_{\nu}^{(2)}(x).\nonumber
\end{align}
We now equate coefficients of the transcendental functions $\arcsin{x}$ and
$(\arcsin{x})^2$, as well as the remaining terms, and we obtain 
\begin{equation}\label{4.9}
2f_{\nu}^{(2)}(x)-xg_{\nu}^{(2)}(x)+(1-x^2)\frac{d}{dx}g_{\nu}^{(2)}(x) = 0,
\end{equation}
as well as
\begin{equation}\label{4.10}
\frac{d}{dx}f_{\nu}^{(2)}(x)=({\nu}+1)x^{\nu}\quad\hbox{and}\quad 
\frac{d}{dx}h_{\nu}^{(2)}(x)=-g_{\nu}^{(2)}(x).
\end{equation}
From the first identity in \eqref{4.10}, we have
\begin{equation}\label{4.11}
f_{\nu}^{(2)}(x)=x^{{\nu}+1}+c_{\nu},
\end{equation}
where $c_{\nu}$ are constants. Now by \eqref{4.9}, \eqref{4.11} and 
Lemma~\ref{lem:4a.1}, we have
\begin{equation}\label{4.11a}
\frac{1}{2}\cdot g_{\nu}^{(2)}(x)
=\mathcal{D}^{-1}(x^{\nu+1}+c_\nu)=g_{\nu}^{(1)}(x),
\end{equation}
and in addition, by \eqref{4a.7} we get $c_{2\ell}=0$ and 
$c_{2\ell+1}=-\binom{2\ell+2}{\ell+1}4^{-\ell-1}$. This is the first part of
\eqref{4.3}, while \eqref{4.11a} gives the second part of \eqref{4.3}.

Finally, the second half of \eqref{4.10} immediately leads to \eqref{4.6} and
\eqref{4.7}. In all cases the constants of integration are 0, which can be
seen by equating \eqref{4.1} and \eqref{4.2} and setting $x=0$. 
This completes the proof. 
\end{proof}

We now combine Lemma~\ref{lem:4.1} with Corollary~\ref{cor:2.2}, to obtain
the following result.

\begin{theorem}\label{thm:4.2}
For all even $n\geq 0$ we have
\begin{align}
\sum_{k=1}^\infty\frac{(2x)^{2k}}{\binom{2k}{k}k(2k+n)}
&=\frac{\binom{n}{n/2}}{(2x)^{n}}\bigg(\arcsin^2{x}\label{4.12} \\
&-\sqrt{1-x^2}\cdot\arcsin{x}
\sum_{j=0}^{\frac{n-2}{2}}\frac{(2x)^{2j+1}}{\binom{2j}{j}(2j+1)}
+\frac{1}{2}\sum_{j=1}^{\frac{n}{2}}\frac{(2x)^{2j}}{\binom{2j}{j}j^2}\bigg),\nonumber
\end{align}
and for all odd $n\geq 1$,
\begin{align}
\sum_{k=1}^\infty\frac{(2x)^{2k}}{\binom{2k}{k}k(2k+n)}
&=\frac{\bigl(\frac{2}{x}\bigr)^n}{n\binom{n-1}{\frac{n-1}{2}}}
\left(2\sum_{j=0}^{\frac{n-1}{2}}\frac{\binom{2j}{j}}{2j+1}\bigl(\frac{x}{2}\bigr)^{2j+1}\right. \label{4.13} \\
&\quad\left.-\sqrt{1-x^2}\cdot\arcsin{x}\sum_{j=0}^{\frac{n-1}{2}}\binom{2j}{j}\bigl(\frac{x}{2}\bigr)^{2j}\right).\nonumber
\end{align}
\end{theorem}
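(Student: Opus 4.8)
The plan is to apply Corollary~\ref{cor:2.2} to the function $f(x)=(\arcsin{x})^2$, exactly mirroring the proof of Theorem~\ref{thm:3.2}, but now using the second power-series expansion in \eqref{1.1} and the evaluation of $I_{\nu}^{(2)}(x)$ from Lemma~\ref{lem:4.1}. Since $(\arcsin{x})^2$ is even, we are in the case $\delta=0$ of Corollary~\ref{cor:2.2}, with coefficients read off from the series $(\arcsin{x})^2=\tfrac12\sum_{k\geq 1}(2x)^{2k}/(\binom{2k}{k}k^2)$; comparing with \eqref{2.4}, this means $c_k/(2k)=\tfrac12\cdot 4^k/(\binom{2k}{k}k^2)$, i.e. $c_k=4^k/(\binom{2k}{k}k)$. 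The left-hand side of \eqref{2.5} then becomes $\sum_{k\geq 1}c_k x^{2k+n}/(2k+n)$, and after pulling out powers of $2$ this matches the series $\sum_{k\geq 1}(2x)^{2k}/(\binom{2k}{k}k(2k+n))$ appearing in both \eqref{4.12} and \eqref{4.13}, up to an explicit factor of $x^n$.

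First I would substitute the formula \eqref{4.2} for $I_{\nu}^{(2)}(x)$ into the right-hand side $f(x)x^n-n\int_0^x t^{n-1}f(t)\,dt$ of \eqref{2.5}, taking $\nu=n-1$. Using \eqref{4.3}, the coefficient of $(\arcsin{x})^2$ becomes $x^n-\tfrac{n}{\nu+1}f_{\nu}^{(2)}(x)=x^n-f_{\nu}^{(1)}(x)$, the coefficient of $\sqrt{1-x^2}\arcsin{x}$ becomes $-\tfrac{n}{\nu+1}g_{\nu}^{(2)}(x)=-2g_{\nu}^{(1)}(x)$, and there is a purely algebraic leftover term $-\tfrac{n}{\nu+1}h_{\nu}^{(2)}(x)=-h_{\nu}^{(2)}(x)$. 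I would then split into the two parities. For even $n\geq 2$ I set $n=2\ell+2$, so $\nu=2\ell+1$, and insert the formulas \eqref{3.3} for $f_{2\ell+1}^{(1)}$, \eqref{3.5} for $g_{2\ell+1}^{(1)}$, and \eqref{4.7} for $h_{2\ell+1}^{(2)}$; the cancellation $x^n-f_{2\ell+1}^{(1)}(x)=\tfrac{1}{4^{\ell+1}}\binom{2\ell+2}{\ell+1}$ collapses the $(\arcsin{x})^2$ coefficient to a constant, which is precisely what produces the clean prefactor. For odd $n\geq 1$ I set $n=2\ell+1$, so $\nu=2\ell$, and use the even-indexed formulas \eqref{3.3}, \eqref{3.4}, \eqref{4.6}; here $f_{2\ell}^{(1)}(x)=x^{2\ell+1}=x^n$, so the $(\arcsin{x})^2$ term vanishes entirely, explaining why no squared-arcsine term survives in \eqref{4.13}.

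After these substitutions, the remaining work is to reconcile the binomial prefactors using the identities \eqref{2.6}, exactly as in Theorem~\ref{thm:3.2}: the relation $\binom{2n-2}{n-1}=\tfrac{n}{2(2n-1)}\binom{2n}{n}$ converts the $\binom{2\ell+2}{\ell+1}4^{-\ell-1}$ and $\binom{2\ell}{\ell}$ factors arising from the lemma into the displayed coefficients $\binom{n}{n/2}$ and $1/(n\binom{n-1}{(n-1)/2})$, while dividing through by the overall $x^n$ (and absorbing factors of $2$) yields the stated $\binom{n}{n/2}/(2x)^n$ and $(2/x)^n/(n\binom{n-1}{(n-1)/2})$ prefactors. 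The main obstacle I anticipate is purely bookkeeping rather than conceptual: keeping the powers of $2$, the $4^{\ell}$ factors, and the shifting summation indices ($j$ versus $j+1$, upper limits $\tfrac{n-2}{2}$, $\tfrac{n}{2}$, $\tfrac{n-1}{2}$) perfectly aligned so that the three finite sums in \eqref{4.6}--\eqref{4.7} reassemble into the two finite correction sums displayed in \eqref{4.12} and \eqref{4.13}. As in the earlier theorem, once the prefactor normalizations via \eqref{2.6} are pinned down, each identity should follow almost immediately from \eqref{2.5}; I would verify the constant-of-integration issue is already handled by the fact, noted in Lemma~\ref{lem:4.1}, that $I_{\nu}^{(2)}(0)=0$.
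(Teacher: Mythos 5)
Your proposal is correct and follows essentially the same route as the paper: Corollary~\ref{cor:2.2} applied to $f(x)=(\arcsin{x})^2$ with $\delta=0$ and $c_k=4^k/\bigl(\binom{2k}{k}k\bigr)$, combined with Lemma~\ref{lem:4.1}, splitting into even $n=2\ell+2$ (so $\nu=2\ell+1$) and odd $n=2\ell+1$ (so $\nu=2\ell$), and your parity bookkeeping --- the collapse of the $(\arcsin{x})^2$ coefficient to the constant $\tfrac{1}{4^{\ell+1}}\binom{2\ell+2}{\ell+1}$ for even $n$ and its vanishing for odd $n$ --- checks out against \eqref{4.12} and \eqref{4.13}. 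The only detail worth adding is the case $n=0$, which Corollary~\ref{cor:2.2} (stated for $n\geq 1$) does not cover; as the paper notes, it is simply the second identity in \eqref{1.1}, since both correction sums in \eqref{4.12} are then empty.
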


\begin{proof}
In analogy to Theorem~\ref{thm:3.2} we use Corollary~\ref{cor:2.2} with 
$f(x)=(\arcsin{x})^2$. Then by \eqref{1.1} we have $\delta=0$ and 
$c_k=4^k/\binom{2k}{k}k$. The case $n=0$ of the theorem is just the second
identity in \eqref{1.1}. We then distinguish again between even $n=2\ell+2$
and odd $n=2\ell+1$, and use \eqref{4.2} with the appropriate cases among
\eqref{4.3}--\eqref{4.7}.
\end{proof}

Setting $x=1$ in Theorem~\ref{thm:4.2}, we immediately get the following.

\begin{corollary}\label{cor:4.3}
For all even $n\geq 0$ we have
\begin{equation}\label{4.14}
\sum_{k=1}^\infty\frac{4^k}{\binom{2k}{k}k(2k+n)}
=\frac{\binom{n}{n/2}}{2^{n+2}}\left(\pi^2+2\sum_{j=1}^{n/2}\frac{4^j}{\binom{2j}{j}j^2}\right),
\end{equation}
and for all odd $n\geq 1$,
\begin{equation}\label{4.15}
\sum_{k=1}^\infty\frac{4^k}{\binom{2k}{k}k(2k+n)}
=\frac{2^n}{\binom{n-1}{\frac{n-1}{2}}n}\sum_{j=0}^{\frac{n-1}{2}}
\frac{\binom{2j}{j}}{(2j+1)4^j}.
\end{equation}
\end{corollary}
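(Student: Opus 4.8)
The plan is to specialize Theorem~\ref{thm:4.2} at $x=1$, which is legitimate by the convergence remark following Corollary~\ref{cor:2.2} (the asymptotics $\binom{2k}{k}\sim 4^k/\sqrt{\pi k}$ show that the series $\sum (2x)^{2k}/(\binom{2k}{k}k(2k+n))$ converges at $x=1$, so Abel's limit theorem applies). Setting $x=1$ and using $\arcsin 1=\pi/2$ and $\sqrt{1-x^2}=0$ should collapse most terms immediately.

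For the even case \eqref{4.14}, I would take $n$ even in \eqref{4.12} and substitute $x=1$. The middle term on the right-hand side carries a factor $\sqrt{1-x^2}\cdot\arcsin x$, which vanishes at $x=1$, so that entire sum drops out. With $\arcsin^2 1=\pi^2/4$ and $(2x)^n=2^n$, the right side becomes
\[
\frac{\binom{n}{n/2}}{2^{n}}\left(\frac{\pi^2}{4}+\frac{1}{2}\sum_{j=1}^{n/2}\frac{4^j}{\binom{2j}{j}j^2}\right),
\]
and pulling out the common factor $\tfrac14$ from inside the parentheses turns $2^n$ into $2^{n+2}$ and rescales the two summands to $\pi^2$ and $2\sum_{j=1}^{n/2}4^j/(\binom{2j}{j}j^2)$, matching \eqref{4.14} exactly. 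So this case is essentially just bookkeeping of constants.

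For the odd case \eqref{4.15}, I would take $n$ odd in \eqref{4.13} and again set $x=1$. Here the \emph{second} group on the right has the factor $\sqrt{1-x^2}\cdot\arcsin x$, so it vanishes at $x=1$. What survives is
\[
\frac{2^n}{n\binom{n-1}{\frac{n-1}{2}}}\cdot 2\sum_{j=0}^{\frac{n-1}{2}}\frac{\binom{2j}{j}}{2j+1}\Bigl(\tfrac12\Bigr)^{2j+1}.
\]
Since $(1/2)^{2j+1}=\tfrac12\cdot 4^{-j}$, the factor $2\cdot\tfrac12=1$ cancels the leading $2$, leaving exactly $\sum_{j=0}^{(n-1)/2}\binom{2j}{j}/((2j+1)4^j)$, which is \eqref{4.15}. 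This reduction is again purely mechanical.

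The work is so routine that the only place for error is in verifying that the vanishing terms really do carry the $\sqrt{1-x^2}$ factor and in tracking the constants correctly; the main obstacle is therefore not mathematical depth but care with the algebra of the coefficients. Accordingly I expect the author's proof to state simply that \eqref{4.14} and \eqref{4.15} follow by substituting $x=1$ into \eqref{4.12} and \eqref{4.13}, with the $\sqrt{1-x^2}\,\arcsin x$ terms dropping out, exactly as the corollary's preamble already announces.
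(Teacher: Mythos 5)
Your proposal is correct and matches the paper's approach exactly: the paper simply states that Corollary~\ref{cor:4.3} follows by setting $x=1$ in Theorem~\ref{thm:4.2}, with the $\sqrt{1-x^2}$ terms vanishing, and your constant-tracking ($\tfrac14$ factored out in the even case, $2\cdot\tfrac12=1$ in the odd case) correctly fills in the routine algebra the paper leaves implicit.
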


As a consequence of Corollary~\ref{cor:4.3} we obtain the following pair of 
series identities. 

\begin{corollary}\label{cor:4.5}
For all integers $n\geq 0$ we have
\begin{align}
\frac{4^n}{\binom{2n}{n}}\sum_{k=1}^\infty\frac{4^k}{\binom{2k}{k}k(k+n)}
&=\pi^2+\sum_{j=n+1}^{\infty}\frac{4^j}{\binom{2j}{j}j^2},\label{4.18} \\
\frac{(2n+1)\binom{2n}{n}}{4^n}
\sum_{k=1}^\infty\frac{4^k}{\binom{2k}{k}k(2k+2n+1)}
&=\pi-2\sum_{j=n+1}^{\infty}\frac{\binom{2j}{j}}{(2j+1)4^j}.\label{4.19}
\end{align}
\end{corollary}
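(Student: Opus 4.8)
The plan is to derive both identities directly from Corollary~\ref{cor:4.3} by specializing its parameter to an even, respectively odd, value, and then to recognize the resulting \emph{finite} sums as truncations of the two classical arcsine series \eqref{1.1} evaluated at $x=1$. For \eqref{4.18} I would take \eqref{4.14} with its index replaced by $2n$, and for \eqref{4.19} I would take \eqref{4.15} with its index replaced by $2n+1$. The crucial elementary observation is that $2k+2n=2(k+n)$, which turns the denominator factor $k(2k+2n)$ appearing in Corollary~\ref{cor:4.3} into $2\cdot k(k+n)$, matching the left-hand side of \eqref{4.18}; the analogous substitution $2k+(2n+1)$ already has the shape $2k+2n+1$ needed in \eqref{4.19}.

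For \eqref{4.18}, after substituting I would use $2k+2n=2(k+n)$ to pull out a factor $\tfrac12$, multiply both sides by the prefactor $4^n/\binom{2n}{n}=2^{2n}/\binom{2n}{n}$, and cancel the powers of $2$ against the $2^{2n+2}$ in \eqref{4.14}. This leaves the left-hand side of \eqref{4.18} equal to $\tfrac{\pi^2}{2}+\sum_{j=1}^{n}\tfrac{4^j}{\binom{2j}{j}j^2}$, a closed form with a finite sum. For \eqref{4.19} the corresponding bookkeeping is even cleaner: the prefactor $(2n+1)\binom{2n}{n}/4^n$ cancels the factor $\binom{2n}{n}(2n+1)$ in the denominator of \eqref{4.15} outright, and the surviving power $2^{2n+1}/4^n$ collapses to $2$, giving the left-hand side of \eqref{4.19} equal to $2\sum_{j=0}^{n}\tfrac{\binom{2j}{j}}{(2j+1)4^j}$.

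The final step is to convert these partial sums into the tail sums $\sum_{j=n+1}^{\infty}$ displayed on the right of \eqref{4.18} and \eqref{4.19}. Here I would invoke the two evaluations at $x=1$ of the series in \eqref{1.1}: from $(\arcsin 1)^2=\pi^2/4$ one gets $\sum_{j=1}^{\infty}\tfrac{4^j}{\binom{2j}{j}j^2}=\tfrac{\pi^2}{2}$, and from $\arcsin 1=\pi/2$ (equivalently Corollary~\ref{cor:3.3} with $n=0$) one gets $\sum_{j=0}^{\infty}\tfrac{\binom{2j}{j}}{(2j+1)4^j}=\tfrac{\pi}{2}$. Writing each finite sum as the corresponding complete series minus its tail then trades the partial sum for a multiple of $\pi^2$ (respectively $\pi$) corrected by the tail $\sum_{j=n+1}^{\infty}$, which is exactly the shape of the right-hand sides of \eqref{4.18} and \eqref{4.19}.

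I expect no genuine obstacle here: the argument is pure bookkeeping once one sees that the two partial sums produced by Corollary~\ref{cor:4.3} are truncations of the classical series \eqref{1.1} at $x=1$. The only places demanding care are the cancellation of the binomial prefactors and powers of $2$ after the index substitution, and the correct pairing of each finite sum with its own complete-series evaluation so that the complementary tail emerges with the right constant.
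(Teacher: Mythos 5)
Your route is the same as the paper's own proof: substitute $n\mapsto 2n$ into \eqref{4.14} and $n\mapsto 2n+1$ into \eqref{4.15}, then use both series of \eqref{1.1} at $x=1$ to trade the resulting finite sums for tails. Your intermediate closed forms are correct, namely
\[
\frac{4^n}{\binom{2n}{n}}\sum_{k=1}^\infty\frac{4^k}{\binom{2k}{k}k(k+n)}
=\frac{\pi^2}{2}+\sum_{j=1}^{n}\frac{4^j}{\binom{2j}{j}j^2},
\qquad
\frac{(2n+1)\binom{2n}{n}}{4^n}\sum_{k=1}^\infty\frac{4^k}{\binom{2k}{k}k(2k+2n+1)}
=2\sum_{j=0}^{n}\frac{\binom{2j}{j}}{(2j+1)4^j},
\]
and for \eqref{4.19} your last step gives
$2\bigl(\tfrac{\pi}{2}-\sum_{j=n+1}^\infty\tfrac{\binom{2j}{j}}{(2j+1)4^j}\bigr)
=\pi-2\sum_{j=n+1}^{\infty}\tfrac{\binom{2j}{j}}{(2j+1)4^j}$, exactly as stated.

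However, your final step for \eqref{4.18} is \emph{not} ``exactly the shape'' of the stated right-hand side: writing $\sum_{j=1}^{n}=\tfrac{\pi^2}{2}-\sum_{j=n+1}^{\infty}$ turns your closed form into
\[
\pi^2-\sum_{j=n+1}^{\infty}\frac{4^j}{\binom{2j}{j}j^2},
\]
with a \emph{minus} sign on the tail, whereas \eqref{4.18} displays a plus sign. You should have flagged this instead of asserting agreement. The resolution is that the printed \eqref{4.18} contains a sign error: at $n=0$ its left-hand side is $\sum_{k\geq 1}4^k/\bigl(\binom{2k}{k}k^2\bigr)=\pi^2/2$ by \eqref{1.1}, while its right-hand side as printed would be $\pi^2+\pi^2/2=3\pi^2/2$; with the minus sign both sides equal $\pi^2/2$. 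So your derivation is sound and in fact proves the corrected identity --- it coincides with what the paper's own two-line proof actually yields --- but as a proof of the literal statement it silently passes over a sign discrepancy that needed to be called out.
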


\begin{proof}
These identities follow immediately from \eqref{4.14} and \eqref{4.15},
respectively, if we use both identities in \eqref{1.1} with $x=1$.
In order to avoid fractions, we replace $n$ by $2n$, resp.\ $2n+1$, in the 
two cases. 
\end{proof}

Similarly to what we did in Section~\ref{sec:3}, we now state the first few
special cases of Theorem~\ref{thm:4.2} for even $n=0, 2, 4, 6$ and $x=1/2$.

\begin{corollary}\label{cor:4.4a}
The following identities hold:
\begin{center}
\begin{tabular}{l|l}
$\displaystyle{\sum_{k=1}^\infty\frac{1}{\binom{2k}{k}k(2k+0)}
=\frac{\pi^2}{36}}$, &
$\displaystyle{\sum_{k=1}^\infty\frac{1}{\binom{2k}{k}k(2k+4)}
= \frac{\pi^2}{6}-\frac{7\pi}{12}\sqrt{3}+\frac{13}{8}}$,\\
$\displaystyle{\sum_{k=1}^\infty\frac{1}{\binom{2k}{k}k(2k+2)}
=\frac{\pi^2}{18}-\frac{\pi}{6}\sqrt{3}+\frac{1}{2}}$, &
$\displaystyle{\sum_{k=1}^\infty\frac{1}{\binom{2k}{k}k(2k+6)}
= \frac{5\pi^2}{9}-2\pi\sqrt{3}+\frac{197}{36} }$.\\
\end{tabular}
\end{center}
\end{corollary}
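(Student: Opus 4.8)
The plan is to specialize the even-$n$ identity \eqref{4.12} of Theorem~\ref{thm:4.2} to $x=\tf12$, the value at which every transcendental and algebraic quantity appearing on the right-hand side collapses to something elementary. Setting $x=\tf12$ gives $2x=1$, so the summand on the left becomes $1/(\binom{2k}{k}k(2k+n))$ and all of the factors $(2x)^{2k}$, $(2x)^{2j+1}$, $(2x)^{2j}$, and $(2x)^n$ are equal to $1$. Moreover $\arcsin\tf12=\tf{\pi}6$, hence $\arcsin^2 x=\tf{\pi^2}{36}$, and $\sqrt{1-x^2}=\tf{\sqrt3}2$. In particular the prefactor $\binom{n}{n/2}/(2x)^n$ reduces to $\binom{n}{n/2}$, and convergence at $x=\tf12<1$ is immediate, so no analytic justification beyond Theorem~\ref{thm:4.2} is needed.

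After these substitutions I expect \eqref{4.12} to become the single closed formula
\begin{equation*}
\sum_{k=1}^\infty\frac{1}{\binom{2k}{k}k(2k+n)}
=\binom{n}{n/2}\left(\frac{\pi^2}{36}
-\frac{\pi\sqrt3}{12}\sum_{j=0}^{\frac{n-2}{2}}\frac{1}{(2j+1)\binom{2j}{j}}
+\frac{1}{2}\sum_{j=1}^{n/2}\frac{1}{j^2\binom{2j}{j}}\right),
\end{equation*}
valid for all even $n\ge 0$, with the understanding that both finite sums are empty (hence zero) when $n=0$, so that the $n=0$ case returns $\pi^2/36$ directly.

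The remaining step is purely computational: for each of $n=0,2,4,6$ I would evaluate $\binom{n}{n/2}$ together with the two finite sums and simplify. For instance, $n=2$ uses $\binom21=2$, the sum $\sum_{j=0}^{0}1/((2j+1)\binom{2j}{j})=1$, and $\sum_{j=1}^{1}1/(j^2\binom{2j}{j})=\tf12$, giving $\tf{\pi^2}{18}-\tf{\pi}6\sqrt3+\tf12$; the cases $n=4$ and $n=6$ proceed identically, with one or two additional terms in each finite sum, yielding the remaining three entries. The only point requiring care is the bookkeeping of the upper limits $\tf{n-2}2$ and $\tf n2$ and the resulting rational arithmetic; there is no genuine obstacle, since Theorem~\ref{thm:4.2} already supplies the identity in closed form.
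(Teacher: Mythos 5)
Your proposal is correct and is exactly the paper's route: the paper presents Corollary~\ref{cor:4.4a} as the specialization of the even-$n$ identity \eqref{4.12} of Theorem~\ref{thm:4.2} to $x=\tfrac12$ (so $2x=1$, $\arcsin\tfrac12=\tfrac{\pi}{6}$, $\sqrt{1-x^2}=\tfrac{\sqrt3}{2}$) for $n=0,2,4,6$, and your intermediate closed formula and the four numerical evaluations all check out.
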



The first of the series in Corollary~\ref{cor:4.4a} is well-known; see, e.g.,
\cite[eq.~(14)]{Le}.

\medskip
Next, in analogy to Corollary~\ref{cor:3.4}, we obtain the following identities 
from Theorem~\ref{thm:4.2}.

\begin{corollary}\label{cor:4.5a}
For all $x\in{\mathbb R}$ with $|x|<1$ and integers $n\geq 0$, we have
\begin{equation}\label{4.20}
\sum_{k=1}^\infty\frac{(2x)^{2k+2n}}{\binom{2k}{k}k(2k+2n)}
=\sqrt{1-x^2}\cdot\arcsin{x}\binom{2n}{n}
\sum_{j=n}^{\infty}\frac{(2x)^{2j+1}}{\binom{2j}{j}(2j+1)},
\end{equation}
\begin{align}
\frac{2n+1}{2^{2n+1}}&\binom{2n}{n}\sum_{k=1}^\infty\frac{4^k}{\binom{2j}{j}}
\cdot\frac{x^{2k+2n+1}}{2k+2n+1}\label{4.21}\\
&=\sqrt{1-x^2}\cdot\arcsin{x}
\sum_{j=n+1}^{\infty}\binom{2j}{j}\big(\frac{x}{2}\big)^{2j}
-2\sum_{j=n+1}^{\infty}\frac{\binom{2j}{j}}{2j+1}\big(\frac{x}{2}\big)^{2j+1}.
\nonumber
\end{align}
\end{corollary}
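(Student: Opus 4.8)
The plan is to follow the template of Corollary~\ref{cor:3.4}, using the known closed forms \eqref{3.15} and \eqref{3.16} for the two auxiliary series to convert the finite sums appearing in Theorem~\ref{thm:4.2} into the tail sums appearing in \eqref{4.20} and \eqref{4.21}. For \eqref{4.20}, I start from the even-$n$ identity \eqref{4.12}. The key observation is that the finite sum $\sum_{j=0}^{(n-2)/2}\frac{(2x)^{2j+1}}{\binom{2j}{j}(2j+1)}$ is a partial sum of the series in \eqref{3.15}; writing that partial sum as the full series minus its tail,
\[
\sum_{j=0}^{n/2-1}\frac{(2x)^{2j+1}}{\binom{2j}{j}(2j+1)}
=\frac{2\arcsin{x}}{\sqrt{1-x^2}}-\sum_{j=n/2}^{\infty}\frac{(2x)^{2j+1}}{\binom{2j}{j}(2j+1)}.
\]
Substituting this into the second term in parentheses in \eqref{4.12}, the $\sqrt{1-x^2}\cdot\arcsin x$ factor cancels the denominator $\sqrt{1-x^2}$ against the leading $\frac{2\arcsin x}{\sqrt{1-x^2}}$, producing exactly $\arcsin^2 x$ with the opposite sign, which cancels the $\arcsin^2 x$ already present. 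What survives is $\sqrt{1-x^2}\cdot\arcsin{x}$ times the tail sum, plus the remaining finite $\frac12\sum_{j=1}^{n/2}$ term. I expect the finite quadratic sum $\tfrac12\sum_{j=1}^{n/2}\frac{(2x)^{2j}}{\binom{2j}{j}j^2}$ to be absorbed: moving the $\binom{n}{n/2}/(2x)^n$ prefactor through and re-indexing via $j\mapsto j+n$ should merge it into the series on the left, leaving only the clean tail on the right. Multiplying by $(2x)^{2n}$ and replacing $n$ by $2n$ (to clear the $n/2$) then yields \eqref{4.20}.

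For \eqref{4.21}, I work from the odd-$n$ identity \eqref{4.13} in the same spirit, now using \eqref{3.16} rather than \eqref{3.15}. Here the finite sum $\sum_{j=0}^{(n-1)/2}\binom{2j}{j}(x/2)^{2j}$ is a partial sum of the expansion of $(1-x^2)^{-1/2}$ in \eqref{3.16}, so I rewrite it as $\frac{1}{\sqrt{1-x^2}}$ minus its tail $\sum_{j\ge (n+1)/2}\binom{2j}{j}(x/2)^{2j}$. The term $\sqrt{1-x^2}\cdot\arcsin x$ multiplying this again telescopes: the $\frac{1}{\sqrt{1-x^2}}$ piece contributes $\arcsin x$ (canceling against a corresponding first-power-arcsine contribution coming from the other finite sum in \eqref{4.13}), while the tail remains intact and reproduces the first sum on the right of \eqref{4.21}. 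The other finite sum $2\sum_{j=0}^{(n-1)/2}\frac{\binom{2j}{j}}{2j+1}(x/2)^{2j+1}$ plays the role analogous to the quadratic sum above and should, after the prefactor $\frac{(2/x)^n}{n\binom{n-1}{(n-1)/2}}$ is cleared and the index shifted, combine with the left-hand series to leave exactly $-2\sum_{j\ge n+1}\frac{\binom{2j}{j}}{2j+1}(x/2)^{2j+1}$. The final cosmetic step is to multiply through by $x^{2n+1}$ (or the appropriate power) and substitute $n\mapsto 2n+1$ to match the stated indexing $\binom{2n}{n}$, $2n+1$.

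The main obstacle I anticipate is the careful bookkeeping of the three simultaneous cancellations in the even case (and the two in the odd case): the transcendental pieces $\arcsin^2 x$, $\sqrt{1-x^2}\arcsin x$, and the purely algebraic finite sums must each be tracked through the prefactor and the index shift $j\mapsto j+n$, and it is easy to drop a factor of $2$ or misalign the summation bounds when splitting a partial sum into ``full series minus tail.'' In particular one must verify that the finite algebraic sum (the $j^2$ sum in the even case, the $\binom{2j}{j}/(2j+1)$ sum in the odd case) really does merge cleanly with the left-hand series after reindexing, rather than leaving a stray finite remainder; this is the delicate point and the step I would check most carefully. The transcendental cancellations themselves are forced by \eqref{3.15} and \eqref{3.16} and should go through essentially automatically, exactly as in the proof of Corollary~\ref{cor:3.4}.
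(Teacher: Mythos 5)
Your strategy coincides with the paper's own proof: substitute the closed forms \eqref{3.15}, \eqref{3.16} and \eqref{1.1} into Theorem~\ref{thm:4.2}, converting the finite sums into tails. For \eqref{4.21} your outline is sound: by the first identity in \eqref{1.1}, $2\sum_{j=0}^{(n-1)/2}\frac{\binom{2j}{j}}{2j+1}\big(\frac{x}{2}\big)^{2j+1}=\arcsin{x}-2\sum_{j\geq(n+1)/2}\frac{\binom{2j}{j}}{2j+1}\big(\frac{x}{2}\big)^{2j+1}$, and the $\arcsin{x}$ produced here cancels \emph{exactly} against the $-\arcsin{x}$ obtained when the other finite sum is replaced, via \eqref{3.16}, by $\frac{1}{\sqrt{1-x^2}}$ minus its tail; the two tails survive and give \eqref{4.21}. (Two remarks: the leftover tail comes from the series for $\arcsin{x}$, not from any merger with the left-hand series; and the printed \eqref{4.21} has typos---$\binom{2j}{j}$ should be $\binom{2k}{k}$, and a factor $k$ is missing from the denominator---which the computation just described produces correctly.)

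Your even case, however, contains a genuine error, and it sits exactly at the step you flagged as delicate. By \eqref{3.15},
\[
-\sqrt{1-x^2}\,\arcsin{x}\sum_{j=0}^{\frac{n-2}{2}}\frac{(2x)^{2j+1}}{\binom{2j}{j}(2j+1)}
=-2\arcsin^2{x}+\sqrt{1-x^2}\,\arcsin{x}\sum_{j=n/2}^{\infty}\frac{(2x)^{2j+1}}{\binom{2j}{j}(2j+1)},
\]
note the factor $2$: the cancellation against the single $\arcsin^2{x}$ in \eqref{4.12} is \emph{not} exact but leaves $-\arcsin^2{x}$. By the second identity in \eqref{1.1} this leftover combines with the finite quadratic sum,
$-\arcsin^2{x}+\frac{1}{2}\sum_{j=1}^{n/2}\frac{(2x)^{2j}}{\binom{2j}{j}j^2}
=-\frac{1}{2}\sum_{j=n/2+1}^{\infty}\frac{(2x)^{2j}}{\binom{2j}{j}j^2}$,
and this tail does not merge with the left-hand series under the reindexing $j=k+n$ (the respective summands are $\frac{(2x)^{2j}}{\binom{2j-2n}{j-n}(j-n)\cdot 2j}$ and $\frac{(2x)^{2j}}{\binom{2j}{j}j^2}$). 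What the method actually proves, after replacing $n$ by $2n$ and clearing the prefactor, is
\[
\sum_{k=1}^\infty\frac{(2x)^{2k+2n}}{\binom{2k}{k}k(2k+2n)}
=\binom{2n}{n}\left(\sqrt{1-x^2}\,\arcsin{x}\sum_{j=n}^{\infty}\frac{(2x)^{2j+1}}{\binom{2j}{j}(2j+1)}
-\frac{1}{2}\sum_{j=n+1}^{\infty}\frac{(2x)^{2j}}{\binom{2j}{j}j^2}\right),
\]
and the extra tail is unavoidable: at $n=0$ the printed \eqref{4.20} would read $\arcsin^2{x}=2\arcsin^2{x}$. So \eqref{4.20} as stated is false (the corollary as printed is in error, not merely your argument), your planned absorption step cannot be carried out or repaired, and the corrected identity above is what both your approach and the paper's own substitution argument deliver once the bookkeeping is done carefully.
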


\begin{proof}
We set $n=0$ in \eqref{3.12}, obtaining
\begin{equation}\label{4.22}
\sum_{k=0}^\infty\frac{\binom{2k}{k}}{4^k}\cdot\frac{x^{2k+1}}{2k+1}
=\frac{\sqrt{1-x^2}}{2}\cdot
\sum_{j=0}^{\infty}\frac{(2x)^{2j+1}}{\binom{2j}{j}(2j+1)}.
\end{equation}
By \eqref{1.1}, the left-hand side of \eqref{4.22} is $\arcsin{x}$, and with
this we substitute \eqref{4.22} along with the right-hand side of \eqref{1.1}
into \eqref{5.11}. Replacing $n$ be $2n$, we obtain \eqref{4.20} after some
easy manipulations.

Next, we substitute the first identity in \eqref{1.1} and \eqref{3.16} into
\eqref{5.12}. Replacing $n$ by $2n+1$, we obtain \eqref{4.21}, again after some
straightforward manipulations.
\end{proof}

\section{The case $f(x)=(\arcsin{x})^3$}\label{sec:5}

In addition to the identities in \eqref{1.1}, there are known power series
expansions for arbitrary positive integer powers of $\arcsin{x}$. Such 
expressions can be found in the book by Schwatt \cite[p.~124]{Sch} or in the
more recent paper \cite{BC}. In general, these identities involve multiple
nested series, but in the case of powers 3 and 4, they can be written in terms
of fairly simple (generalized) harmonic numbers. We begin with the third power,
using a variant of the notation from \cite{BC}:
\begin{equation}\label{5.1}
(\arcsin{x})^3 
= 6\sum_{k=0}^{\infty}\frac{\binom{2k}{k}G(k)}{4^k(2k+1)}x^{2k+1},\qquad
G(k) := \sum_{j=0}^{k-1}\frac{1}{(2j+1)^2}.
\end{equation}

Following the outlines of the previous two sections, we now set
\begin{equation}\label{5.2}
I_{\nu}^{(3)}(x) := \int_0^x t^{\nu}(\arcsin{t})^3\,dt.
\end{equation}
We then have

\begin{lemma}\label{lem:5.1}
For integers ${\nu}\geq 0$ we have
\begin{align}
I_{\nu}^{(3)}(x) = \frac{1}{{\nu}+1}\bigg(&f_{\nu}^{(3)}(x)(\arcsin{x})^3
+g_{\nu}^{(3)}(x)\sqrt{1-x^2}(\arcsin{x})^2 \label{5.3}\\
&+h_{\nu}^{(3)}(x)\arcsin{x}+u_{\nu}^{(3)}(x)\sqrt{1-x^2}+w_{\nu}^{(3)}\bigg),\nonumber
\end{align}
where
\begin{equation}\label{5.4}
f_{\nu}^{(3)}(x)=f_{\nu}^{(1)}(x),\qquad 
g_{\nu}^{(3)}(x)=3g_{\nu}^{(1)}(x),
\end{equation}
and for integers $\ell\geq 0$,
\begin{align}
h_{2\ell}^{(3)}(x) &= 3h_{2\ell}^{(2)}(x),\quad
h_{2\ell+1}^{(3)}(x) = 3h_{2\ell+1}^{(2)}(x)+\frac{3\binom{2\ell+2}{\ell+1}}
{2\cdot 4^{\ell+1}}\sum_{j=1}^{\ell+1}\frac{1}{j^2},\label{5.5}\\
u_{2\ell}^{(3)}(x) &= \frac{-6\cdot 4^{\ell}}{(2\ell+1)\binom{2\ell}{\ell}}
\sum_{r=0}^\ell\bigg(\sum_{j=r}^\ell\frac{1}{(2j+1)^2}\bigg)
\binom{2r}{r}\big(\frac{x}{2}\big)^{2r},\label{5.6}\\
u_{2\ell+1}^{(3)}(x)
&= \frac{-3\binom{2\ell+2}{\ell+1}}{4^{\ell+2}}
\sum_{r=0}^\ell\bigg(\sum_{j=r+1}^{\ell+1}\frac{1}{j^2}\bigg)
\frac{(2x)^{2r+1}}{(2r+1)\binom{2r}{r}},\label{5.7}\\
w_{2\ell}^{(3)}&=\frac{6\cdot 4^{\ell}}{(2\ell+1)\binom{2\ell}{\ell}}
\sum_{j=0}^\ell\frac{1}{(2j+1)^2},\quad w_{2\ell+1}^{(3)}=0.\label{5.8}
\end{align}
\end{lemma}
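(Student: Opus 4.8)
The plan is to mimic exactly the strategy used for Lemma~\ref{lem:4.1}, treating the coefficient functions $f_\nu^{(3)}, g_\nu^{(3)}, h_\nu^{(3)}, u_\nu^{(3)}$ and the constant $w_\nu^{(3)}$ as unknowns and pinning them down by differentiation. First I would differentiate the conjectured identity \eqref{5.3}, so that its left-hand side becomes $(\nu+1)x^\nu(\arcsin x)^3$ while each term on the right produces, via the product rule together with $(\arcsin x)'=(1-x^2)^{-1/2}$ and $(\sqrt{1-x^2})'=-x(1-x^2)^{-1/2}$, a combination of $(\arcsin x)^3,(\arcsin x)^2,\arcsin x$ and $1$ with coefficients in $\mathbb{Q}(x,\sqrt{1-x^2})$. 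Since $\arcsin x$ is transcendental over that field, I would then equate the coefficients of each power of $\arcsin x$ separately.

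Equating the coefficient of $(\arcsin x)^3$ gives $\frac{d}{dx}f_\nu^{(3)}=(\nu+1)x^\nu$, whence $f_\nu^{(3)}=x^{\nu+1}+c_\nu$; the coefficient of $(\arcsin x)^2$, after clearing $\sqrt{1-x^2}$, becomes $(\mathcal{D}g_\nu^{(3)})(x)=3f_\nu^{(3)}$, so by Lemma~\ref{lem:4a.1} (and the same choice of $c_\nu$ that keeps $g$ polynomial) $g_\nu^{(3)}=3g_\nu^{(1)}$, giving \eqref{5.4}. Equating the coefficient of $\arcsin x$ yields $\frac{d}{dx}h_\nu^{(3)}=-2g_\nu^{(3)}=-6g_\nu^{(1)}$; comparing with $\frac{d}{dx}h_\nu^{(2)}=-2g_\nu^{(1)}$ from \eqref{4.10} and \eqref{4.3} shows $h_\nu^{(3)}=3h_\nu^{(2)}+C_\nu$ for an integration constant $C_\nu$. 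Finally, the coefficient of $1$, after clearing $\sqrt{1-x^2}$, gives $(\mathcal{D}u_\nu^{(3)})(x)=h_\nu^{(3)}(x)$, i.e.\ $u_\nu^{(3)}=\mathcal{D}^{-1}h_\nu^{(3)}$, and $w_\nu^{(3)}$ is the constant fixed by $I_\nu^{(3)}(0)=0$, namely $w_\nu^{(3)}=-u_\nu^{(3)}(0)$.

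I expect the crux to be the determination of $C_\nu$. For even $\nu=2\ell$, $h_\nu^{(3)}$ is a polynomial with only odd powers of $x$, $\mathcal{D}^{-1}$ maps each $x^{2r+1}$ to the polynomial $g_{2r}^{(1)}$ by \eqref{4a.8}, and polynomiality of $u_\nu^{(3)}$ forces $C_{2\ell}=0$. For odd $\nu=2\ell+1$, integrating $-6g_{2\ell+1}^{(1)}$ (and using the second identity in \eqref{2.6} to rewrite $1/((2j+1)\binom{2j}{j})$ as $2/(i\binom{2i}{i})$ with $i=j+1$) produces even powers of $x$, on which $\mathcal{D}^{-1}x^{2i}$ carries a non-polynomial piece $4^{-i}\binom{2i}{i}(\mathcal{D}^{-1}1)(x)$ by \eqref{4a.9}. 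Demanding that these logarithmic contributions cancel against $C_{2\ell+1}(\mathcal{D}^{-1}1)(x)$ pins down $C_{2\ell+1}=\frac{3}{2\cdot 4^{\ell+1}}\binom{2\ell+2}{\ell+1}\sum_{j=1}^{\ell+1}\frac{1}{j^2}$, which is exactly the extra term in \eqref{5.5}. Once $C_\nu$ is known, $u_\nu^{(3)}=\mathcal{D}^{-1}h_\nu^{(3)}$ is computed term-by-term from Lemma~\ref{lem:4a.1}, and a routine interchange of the order of summation turns the resulting double sums into the nested forms \eqref{5.6} and \eqref{5.7}; evaluating $u_\nu^{(3)}(0)$, where only the $r=0$ term survives, then gives \eqref{5.8}. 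The remaining work is bookkeeping with the constants in \eqref{2.6}, so the only genuinely delicate point is the cancellation of the $\mathcal{D}^{-1}1$ terms that fixes $C_\nu$.
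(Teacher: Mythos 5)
Your proposal is correct and follows essentially the same route as the paper: differentiate the conjectured form, equate coefficients of powers of $\arcsin x$ to obtain the system $\frac{d}{dx}f_\nu^{(3)}=(\nu+1)x^\nu$, $(\mathcal{D}g_\nu^{(3)})=3f_\nu^{(3)}$, $\frac{d}{dx}h_\nu^{(3)}=-2g_\nu^{(3)}$, $(\mathcal{D}u_\nu^{(3)})=h_\nu^{(3)}$, deduce $h_\nu^{(3)}=3h_\nu^{(2)}+c_\nu$, and fix $c_\nu$ by demanding that the $\mathcal{D}^{-1}(1)$ (non-polynomial) contributions cancel — exactly the paper's argument, including the value of $c_{2\ell+1}$ and the evaluation $w_\nu^{(3)}=-u_\nu^{(3)}(0)$. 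You also correctly identified the determination of $c_\nu$ as the only delicate point; the rest is the same bookkeeping with Lemma~\ref{lem:4a.1} and interchange of summation that the paper carries out.
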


\begin{proof}
In analogy to Lemma~\ref{lem:4.1}, we differentiate \eqref{5.2} and \eqref{5.3},
and after some easy manipulations we get
\begin{align*}
({\nu}+1)x^{\nu}&(\arcsin{x})^3 = \frac{d}{dx}f_{\nu}^{(3)}(x)(\arcsin{x})^3
+\bigg(3f_{\nu}^{(3)}(x)+(1-x^2)\frac{d}{dx}g_{\nu}^{(3)}(x)\\
&-xg_{\nu}^{(3)}(x)\bigg)\sqrt{1-x^2}(\arcsin{x})^2 
+\bigg(2g_{\nu}^{(3)}(x)+\frac{d}{dx}h_{\nu}^{(3)}(x)\bigg)\arcsin{x}\\
&+\bigg(h_{\nu}^{(3)}(x)+(1-x^2)\frac{d}{dx}u_{\nu}^{(3)}(x)-xu_{\nu}^{(3)}(x)
\bigg)\sqrt{1-x^2}.
\end{align*}
This gives
\begin{equation}\label{5.9}
3f_{\nu}^{(3)}(x)-xg_{\nu}^{(3)}(x)+(1-x^2)\frac{d}{dx}g_{\nu}^{(3)}(x) = 0,
\end{equation}
\begin{equation}\label{5.10}
\frac{d}{dx}f_{\nu}^{(3)}(x)=({\nu}+1)x^{\nu}\quad\hbox{and}\quad
\frac{d}{dx}h_{\nu}^{(3)}(x)=-2g_{\nu}^{(3)}(x),
\end{equation}
\begin{equation}\label{5.11}
h_{\nu}^{(3)}(x)-xu_{\nu}^{(3)}(x)+(1-x^2)\frac{d}{dx}u_{\nu}^{(3)}(x) = 0.
\end{equation}
Now \eqref{5.9} and the first part of \eqref{5.10} give the two identities in
\eqref{5.4}, exactly as in the proof of Lemma~\ref{lem:4.1}.

Next, by the second identities in \eqref{5.10}, \eqref{5.4}, \eqref{4.3} and
\eqref{4.10}, we have
\[
\frac{d}{dx}h_{\nu}^{(3)}(x)=-2g_{\nu}^{(3)}(x)=-6g_{\nu}^{(1)}(x)
=-3g_{\nu}^{(2)}(x)=3\frac{d}{dx}h_{\nu}^{(2)}(x),
\]
so that, for each $\nu\geq 0$,
\begin{equation}\label{5.12}
h_{\nu}^{(3)}(x) = 3h_{\nu}^{(2)}(x) + c_{\nu},
\end{equation}
where $c_{\nu}$ is a constant for each $\nu$.

By \eqref{5.11} we have 
$u_{\nu}^{(3)}(x)={\mathcal{D}}^{-1}\big(h_{\nu}^{(3)}\big)(x)$. We deal with 
this according to parity of $\nu$. When $\nu$ is even, $\nu=2\ell$, then by
\eqref{5.12} and \eqref{4.6} we have
\begin{equation}\label{5.13}
u_{2\ell}^{(3)}(x)=\frac{-3\cdot 4^{\ell+1}}{(2\ell+1)\binom{2\ell}{\ell}}
\sum_{j=0}^\ell\frac{\binom{2j}{j}{\mathcal D}^{-1}(x^{2j+1})}{(2j+1)2^{2j+1}}
+{\mathcal D}^{-1}(c_{2\ell})(x).
\end{equation}
With \eqref{4a.8}, we see that the summation in \eqref{5.13} is a polynomial,
and since by assumption $u_{2\ell}^{(3)}(x)$ is also a polynomial, by
\eqref{4a.7} we have $c_{2\ell}=0$. This is the first part of \eqref{5.5}.
Using \eqref{4a.8} again, along with \eqref{5.13} and \eqref{3.4}, we get after
some cancellations,
\begin{equation}\label{5.14}
u_{2\ell}^{(3)}(x)=\frac{-3\cdot 4^{\ell+1}}{(2\ell+1)\binom{2\ell}{\ell}}
\sum_{j=0}^\ell\frac{1}{(2j+1)^2}
\sum_{r=0}^j\binom{2r}{r}\big(\frac{x}{2}\big)^{2r}.
\end{equation}
Upon changing the order of summation, we see that \eqref{5.14} becomes 
\eqref{5.6}.

When $\nu$ is odd, $\nu=2\ell+1$, then by \eqref{5.12} and \eqref{4.7} we have
\begin{equation}\label{5.15}
u_{2\ell+1}^{(3)}(x) = \frac{-3\binom{2\ell+2}{\ell+1}}{2\cdot 4^{\ell+1}}
\sum_{j=0}^{\ell}
\frac{4^{j+1}{\mathcal D}^{-1}(x^{2j+2})}{\binom{2j+2}{j+1}(j+1)^2}
+{\mathcal D}^{-1}(c_{2\ell+1})(x).
\end{equation}
This time, \eqref{4a.9} applies, and we first deal with all terms in 
\eqref{5.15} that are coefficients of ${\mathcal D}^{-1}(1)(x)$. They are, after
some cancellations,
\[
\frac{-3\binom{2\ell+2}{\ell+1}}{2\cdot 4^{\ell+1}}
\sum_{j=0}^{\ell}\frac{1}{(j+1)^2} + c_{2\ell+1}.
\]
This, with \eqref{5.12}, gives the second part of \eqref{5.5}. It also means
that with \eqref{5.15} and \eqref{4a.9} we have
\[
u_{2\ell+1}^{(3)}(x) = \frac{-3\binom{2\ell+2}{\ell+1}}{2\cdot 4^{\ell+1}}
\sum_{j=0}^{\ell}
\frac{4^{j+1}g_{2j+1}^{(1)}(x)}{\binom{2j+2}{j+1}(j+1)^2}.
\]
This identity, together with \eqref{3.5}, gives after some cancellations,
\[
u_{2\ell+1}^{(3)}(x)
= \frac{-3\binom{2\ell+2}{\ell+1}}{4^{\ell+2}} 
\sum_{j=0}^\ell\frac{1}{(j+1)^2}\sum_{r=0}^j
\frac{(2x)^{2r+1}}{(2r+1)\binom{2r}{r}}.
\]
Changing the order of summation, we then get \eqref{5.7}.

Finally, by equating \eqref{5.2} and \eqref{5.3} and setting $x=0$, we get
$w_{\nu}^{(3)}=-u_{\nu}^{(3)}(0)$. This, together with \eqref{5.6} and
\eqref{5.7}, immediately gives \eqref{5.8}, which completes the proof of the
lemma.
\end{proof}

In analogy to Theorems~\ref{thm:3.2} and~\ref{thm:4.2}, we obtain the following
result.

\begin{theorem}\label{thm:5.2}
For all even $n\geq 0$ we have
\begin{align}
&\sum_{k=0}^\infty\frac{\binom{2k}{k}G(k)x^{2k+1}}{4^k(2k+1+n)}
=\frac{\binom{n}{n/2}}{4(2x)^{n}} \label{5.16} \\
&\times\bigg(\frac{2}{3}\arcsin^3{x}-\sqrt{1-x^2}\cdot\arcsin^2{x} 
\cdot\sum_{j=0}^{\frac{n-2}{2}}\frac{(2x)^{2j+1}}{\binom{2j}{j}(2j+1)}\nonumber\\
&+\arcsin{x}\sum_{j=1}^{\frac{n}{2}}\bigg(\frac{(2x)^{2j}}{\binom{2j}{j}}-1\bigg)
\frac{1}{j^2}+\frac{\sqrt{1-x^2}}{2}
\sum_{r=0}^{\frac{n-2}{2}}\bigg(\sum_{j=r+1}^{n/2}\frac{1}{j^2}\bigg)
\frac{(2x)^{2r+1}}{\binom{2r}{r}(2r+1)}\bigg),\nonumber
\end{align}
and for all odd $n\geq 1$,
\begin{align}
&\sum_{k=0}^\infty\frac{\binom{2k}{k}G(k)x^{2k+1}}{4^k(2k+1+n)}
=\frac{\bigl(\frac{2}{x}\bigr)^n}{2n\binom{n-1}{\frac{n-1}{2}}}\label{5.17} \\
&\times\bigg(-\frac{\sqrt{1-x^2}\cdot\arcsin^2{x}}{2}
\sum_{j=0}^{\frac{n-1}{2}}\binom{2j}{j}\bigl(\frac{x}{2}\bigr)^{2j}
+2\arcsin{x}\sum_{j=0}^{\frac{n-1}{2}}\frac{\binom{2j}{j}}{2j+1}\bigl(\frac{x}{2}\bigr)^{2j+1} \nonumber \\
&+\sqrt{1-x^2}\sum_{r=0}^{\frac{n-1}{2}}\bigg(\sum_{j=r}^{\frac{n-1}{2}}
\frac{1}{(2j+1)^2}\bigg)\binom{2r}{r}\big(\frac{x}{2}\big)^{2r}
-\sum_{j=0}^{\frac{n-1}{2}}\frac{1}{(2j+1)^2}\bigg).\nonumber
\end{align}
\end{theorem}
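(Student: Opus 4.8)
The plan is to follow exactly the template established in the proofs of Theorems~\ref{thm:3.2} and~\ref{thm:4.2}: apply Corollary~\ref{cor:2.2} to the function $f(x)=(\arcsin{x})^3$, whose power series is given in \eqref{5.1}. Reading off that expansion, we are in the odd case $\delta=1$, with the coefficients $c_k$ identified so that the left-hand side of \eqref{2.5} matches the series $\sum_{k=0}^\infty\binom{2k}{k}G(k)x^{2k+1+n}/(4^k(2k+1+n))$ appearing on the left of both \eqref{5.16} and \eqref{5.17}. The right-hand side of \eqref{2.5} is then $f(x)x^n-nI^{(3)}_{\nu}(x)$ with $\nu=n-1$, and Lemma~\ref{lem:5.1} supplies the closed form of $I^{(3)}_{\nu}(x)$. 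The case $n=0$ is immediate from \eqref{5.1} itself, so the real work is in the two parity cases $n\geq 1$.

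For even $n$, I would set $n=2\ell+2$, so $\nu=2\ell+1$, and substitute the odd-index data from Lemma~\ref{lem:5.1} into \eqref{2.5}: here $f_{2\ell+1}^{(3)}=f_{2\ell+1}^{(1)}$ from \eqref{3.3}, $g_{2\ell+1}^{(3)}=3g_{2\ell+1}^{(1)}$ from \eqref{3.5}, the harmonic-number term $h_{2\ell+1}^{(3)}$ from the second part of \eqref{5.5} (combined with \eqref{4.7}), $u_{2\ell+1}^{(3)}$ from \eqref{5.7}, and $w_{2\ell+1}^{(3)}=0$ from \eqref{5.8}. Multiplying through by $(\nu+1)=2\ell+2$ and reorganizing, the prefactor $\binom{n}{n/2}/(4(2x)^n)$ emerges after invoking the binomial identities \eqref{2.6} to convert $\binom{2\ell+2}{\ell+1}$ into $\binom{n}{n/2}$; the term $f_{2\ell+1}^{(3)}(x)x^n$ combines with the $(\arcsin x)^3$ contribution from the integral to produce the $\tfrac{2}{3}\arcsin^3 x$ coefficient seen in \eqref{5.16}. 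The odd case $n=2\ell+1$, with $\nu=2\ell$, proceeds identically but uses the even-index data $f_{2\ell}^{(1)}$, $g_{2\ell}^{(3)}=3g_{2\ell}^{(1)}$ from \eqref{3.4}, $h_{2\ell}^{(3)}=3h_{2\ell}^{(2)}$ from \eqref{5.5}, $u_{2\ell}^{(3)}$ from \eqref{5.6}, and the nonzero constant $w_{2\ell}^{(3)}$ from \eqref{5.8}, yielding \eqref{5.17} after the same kind of bookkeeping.

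The routine obstacle, as the authors' own phrase ``after some cancellations'' in Lemma~\ref{lem:5.1} foreshadows, is purely algebraic: matching the double sums exactly. In particular, the term $\arcsin x\sum_{j=1}^{n/2}\bigl((2x)^{2j}/\binom{2j}{j}-1\bigr)/j^2$ in \eqref{5.16} arises from combining the $h_{2\ell+1}^{(2)}$ piece of \eqref{4.7} (which carries the $(2x)^{2j}/(\binom{2j}{j}j^2)$ summand) with the correction constant $\sum_{j=1}^{\ell+1}1/j^2$ from the second part of \eqref{5.5}; verifying that these merge into the single displayed difference, with the index ranges and the factor $n/(\nu+1)=1$ correctly accounted for, is where one must be careful. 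Likewise the nested sum $\sum_{r=0}^{(n-2)/2}\bigl(\sum_{j=r+1}^{n/2}1/j^2\bigr)(2x)^{2r+1}/(\binom{2r}{r}(2r+1))$ comes directly from $u_{2\ell+1}^{(3)}$ in \eqref{5.7} after re-indexing. I expect no conceptual difficulty here—each transcendental component ($\arcsin^3 x$, $\sqrt{1-x^2}\arcsin^2 x$, $\arcsin x$, $\sqrt{1-x^2}$, and the constant) can be matched independently because these functions are linearly independent over the field of rational functions—so the proof reduces to a careful but mechanical verification for each of the five slots, exactly as in the two preceding theorems.
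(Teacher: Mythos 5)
Your proposal is correct and follows essentially the same route as the paper's own proof: apply Corollary~\ref{cor:2.2} with $f(x)=(\arcsin x)^3$, $\delta=1$, and the coefficients read off from \eqref{5.1}, then substitute Lemma~\ref{lem:5.1} with $\nu=n-1$ in the two parity cases $n=2\ell+2$ and $n=2\ell+1$. The paper compresses the remaining algebra into ``straightforward manipulations,'' and your slot-by-slot matching (including the merging of the $h^{(2)}_{2\ell+1}$ piece from \eqref{4.7} with the constant from \eqref{5.5} into the $\bigl((2x)^{2j}/\binom{2j}{j}-1\bigr)/j^2$ term) is exactly that computation.
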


\begin{proof}
With the goal of applying Corollary~\ref{cor:2.2} to the series \eqref{5.1},
we take $f(x)=(\arcsin{x})^3$, $\delta=1$, and 
$c_k=6\binom{2k}{k}G(k)/4^k(2k+1)$. Then with \eqref{2.5} and \eqref{5.2} we get
\[
6\sum_{k=0}^\infty\frac{\binom{2k}{k}G(k)x^{2k+1}}{4^k(2k+1+n)}
=(\arcsin{x})^3-x^{-n}nI_{n-1}^{(3)}(x).
\]
Now we use Lemma~\ref{lem:5.1}, separately for even $n=\nu+1=2\ell+2$ and odd
$n=\nu+1=2\ell+1$. Some straightforward manipulations then give \eqref{5.16}
and \eqref{5.17}, respectively.
\end{proof}

In the special case $x=1$ we have $\arcsin(1)=\pi/2$, and since several terms
disappear in \eqref{5.16} and \eqref{5.17}, we get the following rather simple
expressions.

\begin{corollary}\label{cor:5.3}
For all even $n\geq 0$ we have
\begin{equation}\label{5.18}
\sum_{k=0}^\infty\frac{\binom{2k}{k}G(k)}{4^k(2k+1+n)}
=\frac{\binom{n}{n/2}}{2^{n+3}}\left(\frac{\pi^3}{6}
+\pi\sum_{j=1}^{\frac{n}{2}}\bigg(\frac{4^j}{\binom{2j}{j}}-1\bigg)\frac{1}{j^2}
\right),
\end{equation}
and for all odd $n\geq 1$,
\begin{equation}\label{5.19}
\sum_{k=0}^\infty\frac{\binom{2k}{k}G(k)}{4^k(2k+1+n)}
=\frac{2^{n-1}}{n\binom{n-1}{\frac{n-1}{2}}}\left(
\frac{\pi}{2}\sum_{j=0}^{\frac{n-1}{2}}\frac{\binom{2j}{j}}{(2j+1)4^j} 
-\sum_{j=0}^{\frac{n-1}{2}}\frac{1}{(2j+1)^2}\right).
\end{equation}
\end{corollary}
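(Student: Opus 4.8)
The plan is to obtain both identities by simply specializing Theorem~\ref{thm:5.2} to $x=1$. Before substituting, I would first record that the series on the left-hand sides of \eqref{5.16} and \eqref{5.17} actually converge at $x=1$: since $G(k)=\sum_{j=0}^{k-1}(2j+1)^{-2}$ increases to $\pi^2/8$, the factor $G(k)$ is bounded, and combined with $\binom{2k}{k}/4^k\sim(\pi k)^{-1/2}$ this makes the general term $O(k^{-3/2})$. Hence, by Abel's limit theorem (as noted in the convergence remark of Section~\ref{sec:2}), the value of each series at $x=1$ equals the limit as $x\to 1^-$ of the corresponding right-hand side in Theorem~\ref{thm:5.2}.

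The key observation is that $\sqrt{1-x^2}\to 0$ as $x\to 1$, while every factor multiplying it (a power of $\arcsin x$ times a finite polynomial sum) stays bounded. Consequently, in both \eqref{5.16} and \eqref{5.17} every term carrying a factor $\sqrt{1-x^2}$ vanishes in the limit, leaving only the $\arcsin$-only and purely rational contributions. Using $\arcsin(1)=\pi/2$, the surviving terms of \eqref{5.16} are $\tfrac{2}{3}\arcsin^3 1=\pi^3/12$ and $\tfrac{\pi}{2}\sum_{j=1}^{n/2}\big(4^j/\binom{2j}{j}-1\big)j^{-2}$, while the prefactor becomes $\binom{n}{n/2}/(4\cdot 2^n)$; pulling out a factor of $2$ turns this into exactly \eqref{5.18}. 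For odd $n$ the prefactor becomes $2^n/(2n\binom{n-1}{(n-1)/2})$, and the two surviving terms of \eqref{5.17} are the $\arcsin x$-term, which at $x=1$ gives $\tfrac{\pi}{2}\sum_{j=0}^{(n-1)/2}\binom{2j}{j}/((2j+1)4^j)$ after using $(x/2)^{2j+1}=1/(2\cdot 4^j)$, together with the constant term $-\sum_{j=0}^{(n-1)/2}(2j+1)^{-2}$; this yields \eqref{5.19}.

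There is essentially no obstacle here beyond routine bookkeeping: the only point requiring a moment of care is the legitimacy of passing to $x=1$, which is precisely why I flag the convergence estimate and invoke Abel's theorem. Everything else is the arithmetic of evaluating $\arcsin$ and the elementary powers of $1/2$ at $x=1$ and absorbing the numerical constants into the stated normalizations.
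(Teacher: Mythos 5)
Your proposal is correct and follows exactly the paper's route: the paper obtains Corollary~\ref{cor:5.3} by setting $x=1$ in Theorem~\ref{thm:5.2}, observing that all terms carrying a factor $\sqrt{1-x^2}$ vanish and using $\arcsin(1)=\pi/2$, with the legitimacy of the substitution covered once and for all by the Abel's-theorem remark in Section~\ref{sec:2}. Your explicit convergence estimate ($O(k^{-3/2})$ terms) and the constant bookkeeping match what the paper leaves implicit.
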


\section{The case $f(x)=(\arcsin{x})^4$}\label{sec:6}

We now continue with the fourth power of $\arcsin{x}$, using a variant of the 
notation from \cite{BC}:
\begin{equation}\label{6.1}
(\arcsin{x})^4
= 6\sum_{k=1}^{\infty}\frac{4^kH(k)}{\binom{2k}{k}k^2}x^{2k},\qquad
H(k) := \sum_{j=1}^{k-1}\frac{1}{(2j)^2}.
\end{equation}

Following again the outlines of the previous two sections, we now set
\begin{equation}\label{6.2}
I_{\nu}^{(4)}(x) := \int_0^x t^{\nu}(\arcsin{t})^4\,dt.
\end{equation}
We then have

\begin{lemma}\label{lem:6.1}
For integers ${\nu}\geq 0$ we have
\begin{align}
I_{\nu}^{(4)}(x) &= \frac{1}{{\nu}+1}\bigg(f_{\nu}^{(4)}(x)(\arcsin{x})^4
+g_{\nu}^{(4)}(x)\sqrt{1-x^2}(\arcsin{x})^3 \label{6.3}\\
&\quad+h_{\nu}^{(4)}(x)(\arcsin{x})^2+u_{\nu}^{(4)}(x)\sqrt{1-x^2}\arcsin{x}
+w_{\nu}^{(4)}(x)\bigg),\nonumber
\end{align}
where
\begin{equation}\label{6.4}
f_{\nu}^{(4)}(x)=f_{\nu}^{(1)}(x),\qquad
g_{\nu}^{(4)}(x)=4g_{\nu}^{(1)}(x),
\end{equation}
\begin{equation}\label{6.5}
h_{\nu}^{(4)}(x)=2h_{\nu}^{(3)}(x),\qquad
u_{\nu}^{(4)}(x)=4u_{\nu}^{(3)}(x),
\end{equation}
and for integers $\ell\geq 0$,
\begin{align}
w_{2\ell}^{(4)}(x) &= \frac{3\cdot 4^{\ell+2}}{(2\ell+1)\binom{2\ell}{\ell}}
\sum_{r=0}^\ell\bigg(\sum_{j=r}^\ell\frac{1}{(2j+1)^2}\bigg)
\frac{\binom{2r}{r}\big(\frac{x}{2}\big)^{2r+1}}{2r+1},\label{6.6}\\
w_{2\ell+1}^{(4)}(x)
&= \frac{6\binom{2\ell+2}{\ell+1}}{4^{\ell+2}}
\sum_{r=1}^{\ell+1}\bigg(\sum_{j=r}^{\ell+1}\frac{1}{j^2}\bigg)
\frac{(2x)^{2r}}{\binom{2r}{r}r^2}.\label{6.7}
\end{align}
\end{lemma}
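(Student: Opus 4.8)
The plan is to follow the method of Lemma~\ref{lem:5.1} essentially verbatim, reducing everything to the already-established formulas for $(\arcsin x)^3$. First I would posit that $I_\nu^{(4)}(x)$ has the form \eqref{6.3} with $f_\nu^{(4)},g_\nu^{(4)},h_\nu^{(4)},u_\nu^{(4)},w_\nu^{(4)}$ unknown polynomials, then differentiate both \eqref{6.2} and \eqref{6.3} and multiply by $\nu+1$. Writing $A=\arcsin x$ and $s=\sqrt{1-x^2}$ (so that $A'=1/s$ and $s'=-x/s$), the left-hand side is $(\nu+1)x^\nu A^4$, while the right-hand side expands into contributions proportional to $A^4$, to $A^3/s$ and $sA^3$, to $A^2$, to $A/s$ and $sA$, and to a pure polynomial. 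Since these functional types are linearly independent, equating them separately yields
\begin{equation*}
(f_\nu^{(4)})'=(\nu+1)x^\nu,\qquad 4f_\nu^{(4)}-xg_\nu^{(4)}+(1-x^2)(g_\nu^{(4)})'=0,
\end{equation*}
\begin{equation*}
3g_\nu^{(4)}+(h_\nu^{(4)})'=0,\qquad 2h_\nu^{(4)}-xu_\nu^{(4)}+(1-x^2)(u_\nu^{(4)})'=0,\qquad (w_\nu^{(4)})'=-u_\nu^{(4)}.
\end{equation*}

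The first two equations are exactly the pattern of \eqref{5.9}--\eqref{5.10}: the first gives $f_\nu^{(4)}=f_\nu^{(1)}$, with the additive constant determined, as in the proof of Lemma~\ref{lem:4.1}, by the requirement that $g_\nu^{(4)}$ be a polynomial; and the second reads $\mathcal{D}g_\nu^{(4)}=4f_\nu^{(1)}$, so by \eqref{4a.3}, Lemma~\ref{lem:4a.1}, and linearity we get $g_\nu^{(4)}=4g_\nu^{(1)}$. This is \eqref{6.4}. The third equation gives $(h_\nu^{(4)})'=-3g_\nu^{(4)}=-12g_\nu^{(1)}$, which is precisely twice $(h_\nu^{(3)})'=-2g_\nu^{(3)}=-6g_\nu^{(1)}$; hence $h_\nu^{(4)}=2h_\nu^{(3)}+c_\nu$ for some constant $c_\nu$.

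To pin down $c_\nu$ I would use the fourth equation, which reads $\mathcal{D}u_\nu^{(4)}=2h_\nu^{(4)}=4h_\nu^{(3)}+2c_\nu$. Since $u_\nu^{(3)}=\mathcal{D}^{-1}h_\nu^{(3)}$ is already a polynomial, applying $\mathcal{D}^{-1}$ yields $u_\nu^{(4)}=4u_\nu^{(3)}+2c_\nu\,\mathcal{D}^{-1}(1)$, and by \eqref{4a.7} the term $\mathcal{D}^{-1}(1)$ is the non-polynomial $\log(x+\sqrt{x^2-1})/\sqrt{x^2-1}$. The standing assumption that $u_\nu^{(4)}$ is a polynomial therefore forces $c_\nu=0$, giving $h_\nu^{(4)}=2h_\nu^{(3)}$ and $u_\nu^{(4)}=4u_\nu^{(3)}$, which is \eqref{6.5}. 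Finally, $(w_\nu^{(4)})'=-u_\nu^{(4)}=-4u_\nu^{(3)}$ is integrated term by term from the explicit forms \eqref{5.6} and \eqref{5.7}, the integration constant being $0$ because setting $x=0$ in \eqref{6.3} and using $I_\nu^{(4)}(0)=0$ forces $w_\nu^{(4)}(0)=0$. For even $\nu=2\ell$ this reproduces \eqref{6.6} directly, while for odd $\nu=2\ell+1$ I would reindex the resulting sum by $r\mapsto r+1$ and apply the second identity in \eqref{2.6} to rewrite the resulting $\binom{2r-2}{r-1}$ factors, landing on the normalized $\binom{2r}{r}r^2$ denominators of \eqref{6.7}.

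The conceptual content here is light: once the ansatz \eqref{6.3} is granted, the five coefficient equations and the earlier lemmas force every coefficient. I expect the only real obstacle to be computational, namely carrying out the term-by-term integration of \eqref{5.6} and \eqref{5.7} and, in the odd case, managing the shift of summation index together with the binomial simplification \eqref{2.6} without sign or index errors, so that the double sums assemble into precisely the normalized form of \eqref{6.6} and \eqref{6.7}.
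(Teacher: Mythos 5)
Your proposal is correct and follows essentially the same route as the paper's proof: the same ansatz-and-differentiate derivation of the coefficient equations (the paper's \eqref{6.8}--\eqref{6.10}), the same use of Lemma~\ref{lem:4a.1} and the polynomiality requirement via \eqref{4a.7} to force the additive constants to vanish (your observation that $u_\nu^{(3)}=\mathcal{D}^{-1}h_\nu^{(3)}$ is already a polynomial is exactly the mechanism behind the paper's terse ``in exactly the same way as in the proof of Lemma~\ref{lem:5.1}''), and the same term-by-term integration for $w_\nu^{(4)}$ with the constant fixed by setting $x=0$. Your reindex-then-apply-\eqref{2.6} manipulation in the odd case is equivalent to the paper's identity $(2r+1)(2r+2)\binom{2r}{r}=\binom{2r+2}{r+1}(r+1)^2$, so the two write-ups differ only in this cosmetic detail.
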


\begin{proof}
We follow the outlines of the proofs of Lemmas~\ref{lem:4.1} and~\ref{lem:5.1}
and differentiate \eqref{6.2} and \eqref{6.3}. Collecting the coefficients of
$(\arcsin{x})^j$, $0\leq j\leq 4$, we then get
\begin{equation}\label{6.8}
\frac{d}{dx}f_{\nu}^{(4)}(x)=({\nu}+1)x^{\nu}\quad\hbox{and}\quad
\frac{d}{dx}h_{\nu}^{(4)}(x)=-3g_{\nu}^{(4)}(x),
\end{equation}
\begin{equation}\label{6.9}
4f_{\nu}^{(4)}(x)=\big(\mathcal{D}g_{\nu}^{(4)}\big)(x)\quad\hbox{and}\quad
2h_{\nu}^{(4)}(x)=\big(\mathcal{D}u_{\nu}^{(4)}\big)(x),
\end{equation}
\begin{equation}\label{6.10}
\frac{d}{dx}w_{\nu}^{(4)}(x)=-u_{\nu}^{(4)}(x),
\end{equation}
where $\mathcal{D}$ is the differential operator defined in 
Section~\ref{sec:4a}. Now, in exactly the same way as in the proof of 
Lemma~\ref{lem:5.1}, the identities in \eqref{6.8} and \eqref{6.9} lead to 
those in \eqref{6.4} and \eqref{6.5}. Finally, we integrate \eqref{5.6} and
\eqref{5.7}, and with \eqref{6.10} and \eqref{6.5} we get \eqref{6.6} and
\eqref{6.7}. 

Only \eqref{6.7} requires some further explanation. The integration mentioned
gives
\[
w_{2\ell+1}^{(4)}(x) = \frac{6\binom{2\ell+2}{\ell+1}}{4^{\ell+2}}
 \sum_{r=0}^\ell\bigg(\sum_{j=r+1}^{\ell+1}\frac{1}{j^2}\bigg)
\frac{(2x)^{2r+2}}{(2r+1)(2r+2)\binom{2r}{r}}.
\]
It is easy to verify that 
\[
(2r+1)(2r+2)\binom{2r}{r} = \binom{2r+2}{r+1}(r+1)^2.
\]
With this, and by shifting the summation, i.e., replacing $r+1$ by $r$, we
obtain \eqref{6.7}.

In both \eqref{6.6} and \eqref{6.7} the constants of integration are 0, which 
can be seen, as before, by equation \eqref{6.2} and \eqref{6.3} and setting 
$x=0$.
\end{proof}

Once again as before, we combine Lemma~\ref{lem:6.1} with 
Corollary~\ref{cor:2.2}.

\begin{theorem}\label{thm:6.2}
For all even $n\geq 0$ we have
\begin{align}
&\sum_{k=1}^\infty\frac{4^kH(k)x^{2k}}{\binom{2k}{k}k(2k+n)}
=\frac{\binom{n}{n/2}}{(2x)^{n}}
\bigg(\frac{\arcsin^4{x}}{6}-\frac{\sqrt{1-x^2}}{3}\arcsin^3{x} 
\cdot\sum_{j=0}^{\frac{n-2}{2}}\frac{(2x)^{2j+1}}{\binom{2j}{j}(2j+1)}\label{6.11}\\
&+\frac{\arcsin^2{x}}{2}\sum_{j=1}^{\frac{n}{2}}
\bigg(\frac{(2x)^{2j}}{\binom{2j}{j}}-1\bigg)\frac{1}{j^2}
+\frac{\sqrt{1-x^2}}{2}\arcsin{x}\cdot
\sum_{r=0}^{\frac{n-2}{2}}\bigg(\sum_{j=r+1}^{n/2}\frac{1}{j^2}\bigg)
\frac{(2x)^{2r+1}}{\binom{2r}{r}(2r+1)}\nonumber\\
&-\frac{1}{4}\sum_{r=1}^{n/2}\bigg(\sum_{j=r}^{n/2}\frac{1}{j^2}\bigg)
\frac{(2x)^{2r}}{\binom{2r}{r}r^2}\bigg),\nonumber
\end{align}
and for all odd $n\geq 1$,
\begin{align}
&\sum_{k=1}^\infty\frac{4^kH(k)x^{2k}}{\binom{2k}{k}k(2k+n)}
=\frac{\bigl(\frac{2}{x}\bigr)^n}{n\binom{n-1}{\frac{n-1}{2}}}
\bigg(-\frac{\sqrt{1-x^2}}{6}\arcsin^3{x}
\sum_{j=0}^{\frac{n-1}{2}}\binom{2j}{j}\bigl(\frac{x}{2}\bigr)^{2j}\label{6.12} \\
&+\arcsin^2{x}\sum_{j=0}^{\frac{n-1}{2}}\frac{\binom{2j}{j}\bigl(\frac{x}{2}\bigr)^{2j+1}}{2j+1}
+\sqrt{1-x^2}\arcsin{x}\sum_{r=0}^{\frac{n-1}{2}}\bigg(\sum_{j=r}^{\frac{n-1}{2}}
\frac{1}{(2j+1)^2}\bigg)\binom{2r}{r}\big(\frac{x}{2}\big)^{2r}\nonumber\\
&-2\sum_{r=0}^{\frac{n-1}{2}}\bigg(\sum_{j=r}^{\frac{n-1}{2}}
\frac{1}{(2j+1)^2}\bigg)\frac{\binom{2r}{r}\big(\frac{x}{2}\big)^{2r+1}}{2r+1}
\bigg).\nonumber
\end{align}
\end{theorem}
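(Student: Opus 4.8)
The plan is to follow the proofs of Theorems~\ref{thm:3.2}, \ref{thm:4.2}, and~\ref{thm:5.2} verbatim, now feeding the closed form of Lemma~\ref{lem:6.1} into Corollary~\ref{cor:2.2}. Since $(\arcsin x)^4$ is even, I take $\delta=0$ in \eqref{2.4}; comparing the general term $c_kx^{2k}/(2k)$ with \eqref{6.1} identifies $c_k$ as a fixed rational multiple of $4^kH(k)/(\binom{2k}{k}k)$. Substituting into \eqref{2.5} and pulling out the factor $x^n$ on the left yields the master identity
\[
c\cdot x^n\sum_{k=1}^\infty\frac{4^kH(k)x^{2k}}{\binom{2k}{k}k(2k+n)}
=(\arcsin x)^4\,x^n-nI_{n-1}^{(4)}(x),
\]
with $c$ the rational constant just mentioned. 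Into the right-hand side I insert Lemma~\ref{lem:6.1}, whose prefactor $\tfrac1{\nu+1}=\tfrac1n$ cancels the explicit $n$, and then divide by $x^n$.

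The decisive simplification --- exactly as in the lower powers --- is that $f_{n-1}^{(4)}=f_{n-1}^{(1)}$ by \eqref{6.4}. For even $n=2\ell+2$ (so $\nu=2\ell+1$) the leading monomial $x^{n}$ of $f_{n-1}^{(1)}$ cancels the term $x^n(\arcsin x)^4$, and by \eqref{3.3} only the constant $-\binom{2\ell+2}{\ell+1}4^{-\ell-1}$ survives; after dividing by $x^n$ and rewriting the binomial through \eqref{2.6}, this is precisely the prefactor $\binom{n}{n/2}/(2x)^n$ multiplying the leading $(\arcsin x)^4$ term in \eqref{6.11}. For odd $n=2\ell+1$ (so $\nu=2\ell$) one has $f_{2\ell}^{(1)}=x^{2\ell+1}=x^n$, so the $(\arcsin x)^4$ contribution cancels completely; this is why \eqref{6.12} carries no such term, and its overall prefactor $(2/x)^n/(n\binom{n-1}{(n-1)/2})$ instead emerges from the common normalization $4^\ell/((2\ell+1)\binom{2\ell}{\ell})$ shared by the surviving polynomials, after applying \eqref{2.6}. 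In both parities I then substitute $g_{\nu}^{(4)}=4g_{\nu}^{(1)}$, $h_{\nu}^{(4)}=2h_{\nu}^{(3)}$, $u_{\nu}^{(4)}=4u_{\nu}^{(3)}$ from \eqref{6.4}--\eqref{6.5}, together with $w_{\nu}^{(4)}$ from \eqref{6.6}--\eqref{6.7}; these five polynomials attach respectively to $(\arcsin x)^4$, $\sqrt{1-x^2}(\arcsin x)^3$, $(\arcsin x)^2$, $\sqrt{1-x^2}\arcsin x$, and the $\arcsin$-free remainder, matching the five groups of terms on the right of \eqref{6.11} and \eqref{6.12}.

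The conceptual content is thus already contained in the earlier sections, and the main obstacle is purely the bookkeeping of the nested sums. The most delicate point is the coefficient of $(\arcsin x)^2$: by \eqref{5.5} the polynomial $h_{\nu}^{(3)}$ is itself the sum of an $h_{\nu}^{(2)}$-piece, carrying $\sum(2x)^{2j}/(\binom{2j}{j}j^2)$ from \eqref{4.7}, and an extra constant piece proportional to $\sum 1/j^2$; these must be recombined to produce exactly the factor $\sum_{j=1}^{n/2}\bigl((2x)^{2j}/\binom{2j}{j}-1\bigr)j^{-2}$ of \eqref{6.11}, with the analogous $\sum 1/(2j+1)^2$ pattern in \eqref{6.12}. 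Likewise the double sums inherited from $u_{\nu}^{(3)}$ (via \eqref{5.6}--\eqref{5.7}) and from $w_{\nu}^{(4)}$ (via \eqref{6.6}--\eqref{6.7}) must each be matched, after one interchange of the order of summation, to the stated forms $\sum_r\bigl(\sum_j\cdots\bigr)$. Throughout, every $\binom{2\ell+2}{\ell+1}$ or $\binom{2\ell}{\ell}$ has to be converted to $\binom{n}{n/2}$ or $\binom{n-1}{(n-1)/2}$ by means of \eqref{2.6}; once this is carried out consistently, \eqref{6.11} and \eqref{6.12} follow.
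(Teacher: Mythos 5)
Your strategy coincides with the paper's own proof: apply Corollary~\ref{cor:2.2} to $f(x)=(\arcsin x)^4$ with $\delta=0$, insert Lemma~\ref{lem:6.1} into the resulting master identity, split according to the parity of $n=\nu+1$, and convert binomials via \eqref{2.6}. All of your structural observations (cancellation of $x^n(\arcsin x)^4$ through $f_{n-1}^{(4)}=f_{n-1}^{(1)}$, the five groups of terms, the recombination that produces $\sum_j\bigl((2x)^{2j}/\binom{2j}{j}-1\bigr)j^{-2}$) are correct and match the paper.

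The gap sits exactly where you stay vague: the normalization constant is never pinned down, and the final constant-matching step, as you assert it, fails. Comparing $c_kx^{2k}/(2k)$ with \eqref{6.1} forces $c_k=12\cdot4^kH(k)/(\binom{2k}{k}k)$, so the master identity is $12\sum_{k\ge1}4^kH(k)x^{2k}/(\binom{2k}{k}k(2k+n))=(\arcsin x)^4-x^{-n}nI_{n-1}^{(4)}(x)$. For even $n$, the constant surviving the $f$-cancellation is $\binom{n}{n/2}/2^n$, so after dividing by $x^n$ \emph{and by} $12$ the leading term of the sum is $\frac{\binom{n}{n/2}}{(2x)^n}\cdot\frac{\arcsin^4x}{12}$, and every remaining term likewise carries $\tfrac1{12}$ of its bracket coefficient; a careful execution of your plan therefore yields exactly \emph{one half} of the right-hand side of \eqref{6.11}, whose displayed coefficients $\tfrac16,\tfrac13,\tfrac12,\tfrac12,\tfrac14$ are twice the ones the method produces. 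In fact \eqref{6.11} as printed cannot be proved by any correct argument: at $n=0$ its left-hand side equals $\arcsin^4(x)/12$ by \eqref{6.1}, while its right-hand side is $\arcsin^4(x)/6$ (numerically, for $n=2$ and $x=\tfrac12$ the two sides are $\approx 0.00427$ and $\approx 0.00855$). The same factor of $2$ afflicts \eqref{6.13}, whereas the limit \eqref{6.15} is consistent only with the halved version, so the misprint is in the theorem's even case rather than in your strategy. By contrast, for odd $n$ the term $f_{n-1}^{(1)}=x^n$ cancels with no leftover constant, and tracking the factor $12$ against $g^{(4)}=4g^{(1)}$, $h^{(4)}=2h^{(3)}$, $u^{(4)}=4u^{(3)}$, $w^{(4)}$ gives precisely the coefficients $\tfrac16,1,1,2$ of \eqref{6.12}; that half of your argument is sound. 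So: right method, but your claim that the bookkeeping lands exactly on \eqref{6.11} is false, and the fact that you assert it shows the constants were never actually carried through --- the one computation that was genuinely left to do here. The even case should read with prefactor $\binom{n}{n/2}/(2^{n+1}x^n)$ (equivalently, with all inner coefficients halved).
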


\begin{proof}
Considering \eqref{6.1}, we apply Corollary~\ref{cor:2.2} with
$f(x)=(\arcsin{x})^4$, $\delta=0$, and $c_k=12\cdot 4^kH(k)/\binom{2k}{k}k$. 
Then with \eqref{2.5} and \eqref{6.2} we get
\[
12\sum_{k=1}^\infty\frac{4^kH(k)x^{2k}}{\binom{2k}{k}k(2k+n)}
=(\arcsin{x})^4-x^{-n}nI_{n-1}^{(4)}(x).
\]
As before, we use Lemma~\ref{lem:6.1}, separately for even $n=\nu+1=2\ell+2$ and
for odd $n=\nu+1=2\ell+1$, thus obtaining \eqref{6.11} and \eqref{6.12}, 
respectively.
\end{proof}

Next, we consider again the principal special case $x=1$. Then with 
\eqref{6.11} and \eqref{6.12} we immediately get the following identities.

\begin{corollary}\label{cor:6.3}
For all even $n\geq 0$ we have
\begin{align}
\sum_{k=1}^\infty\frac{4^kH(k)}{\binom{2k}{k}k(2k+n)}
=\frac{\binom{n}{n/2}}{2^{n+3}}&\left(\frac{\pi^4}{12}+\pi^2\sum_{j=1}^{\frac{n}{2}}\bigg(\frac{4^j}{\binom{2j}{j}}-1\bigg)\frac{1}{j^2}\right. \label{6.13}\\ 
&\left.\quad-2\sum_{r=1}^{n/2}\bigg(\sum_{j=r}^{n/2}\frac{1}{j^2}\bigg)\frac{4^r}{\binom{2r}{r}r^2}\right),\nonumber
\end{align}
and for all odd $n\geq 1$,
\begin{equation}\label{6.14}
\sum_{k=1}^\infty\frac{4^kH(k)}{\binom{2k}{k}k(2k+n)}
=\frac{2^n}{n\binom{n-1}{\frac{n-1}{2}}}\sum_{r=0}^{\frac{n-1}{2}}
\bigg(\frac{\pi^2}{8}-\sum_{j=r}^{\frac{n-1}{2}}\frac{1}{(2j+1)^2}\bigg)
\frac{\binom{2r}{r}}{(2r+1)4^r}.
\end{equation}
\end{corollary}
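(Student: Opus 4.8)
The plan is to derive both identities by the single substitution $x=1$ in Theorem~\ref{thm:6.2}, relying on the two special values $\arcsin(1)=\pi/2$ and $\sqrt{1-1^2}=0$. The vanishing of $\sqrt{1-x^2}$ at $x=1$ is the decisive feature: in each of \eqref{6.11} and \eqref{6.12} it annihilates every term that carries this factor, leaving only the contributions built from even powers of $\arcsin{x}$ together with the final, purely algebraic sum.

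First I would treat the even case, starting from \eqref{6.11}. Setting $x=1$ eliminates the $\arcsin^3$-term and the $\arcsin$-term (both multiplied by $\sqrt{1-x^2}$), so only the $\arcsin^4$, $\arcsin^2$, and last sums survive. Using $(2x)^n=2^n$ turns the prefactor into $\binom{n}{n/2}/2^n$, while $\arcsin(1)=\pi/2$ converts the three surviving terms into $\tfrac16(\pi/2)^4=\tfrac{\pi^4}{96}$, then $\tfrac12(\pi/2)^2\sum_j(\tfrac{4^j}{\binom{2j}{j}}-1)\tfrac{1}{j^2}=\tfrac{\pi^2}{8}\sum_j(\cdots)$, and finally $-\tfrac14\sum_r(\cdots)$. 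The only remaining task is to check that extracting a factor $\tfrac18$ from the prefactor, so as to produce the denominator $2^{n+3}$ of \eqref{6.13}, rescales these three pieces into exactly $\tfrac{\pi^4}{12}$, $\pi^2\sum_j(\cdots)$, and $-2\sum_r(\cdots)$; this is a routine reconciliation of numerical constants.

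For odd $n$ I would substitute $x=1$ into \eqref{6.12}, which now removes the first and third terms (the $\arcsin^3$- and $\arcsin$-terms). After using $(2/x)^n=2^n$, $\arcsin(1)=\pi/2$, and $\big(\tfrac{x}{2}\big)^{2r+1}=\tfrac{1}{2\cdot 4^r}$ at $x=1$, the surviving $\arcsin^2$-term yields $\tfrac{\pi^2}{8}\sum_j\tfrac{\binom{2j}{j}}{(2j+1)4^j}$ and the final term yields $-\sum_r\big(\sum_{j=r}^{(n-1)/2}\tfrac{1}{(2j+1)^2}\big)\tfrac{\binom{2r}{r}}{(2r+1)4^r}$. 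The main, indeed the only, obstacle is organizational rather than conceptual: one must relabel the summation variable of the $\arcsin^2$-term from $j$ to $r$ so that both surviving pieces share the common weight $\tfrac{\binom{2r}{r}}{(2r+1)4^r}$, at which point they combine into the single sum $\sum_{r=0}^{(n-1)/2}\big(\tfrac{\pi^2}{8}-\sum_{j=r}^{(n-1)/2}\tfrac{1}{(2j+1)^2}\big)\tfrac{\binom{2r}{r}}{(2r+1)4^r}$ displayed in \eqref{6.14}. No ingredient beyond Theorem~\ref{thm:6.2} is needed.
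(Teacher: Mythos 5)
Your proposal is correct and is exactly the paper's argument: the authors also obtain Corollary~\ref{cor:6.3} by setting $x=1$ in Theorem~\ref{thm:6.2}, letting $\sqrt{1-x^2}=0$ kill the odd-power $\arcsin$ terms and $\arcsin(1)=\pi/2$ evaluate the rest. Your constant-checking in the even case and the relabeling/combination of the two surviving sums in the odd case both reconcile precisely with \eqref{6.13} and \eqref{6.14}.
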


\section{Some limit expressions}\label{sec:7}

Some of the results in Sections~\ref{sec:3} and~\ref{sec:4}--\ref{sec:6} give
rise to various limit expressions. We collect these results in the current
section, beginning with limits involving powers of $\pi$.

\begin{corollary}\label{cor:7.1}
We have the following limits as $n\rightarrow\infty$:
\begin{align}
\frac{2^{n+1}}{\binom{n}{n/2}}\sum_{k=0}^\infty
&\frac{\binom{2k}{k}}{4^k(2k+n+1)}\rightarrow\pi,\label{4.16}\\
\frac{2^{n+1}}{\binom{n}{n/2}}\sum_{k=1}^\infty
&\frac{4^k}{\binom{2k}{k}k(2k+n)}\rightarrow\pi^2,\label{4.17}\\
2\cdot\frac{2^{n+3}}{\binom{n}{n/2}}\sum_{k=0}^\infty
&\frac{\binom{2k}{k}G(k)}{4^k(2k+n+1)}\rightarrow\pi^3,\label{5.20}\\
6\cdot\frac{2^{n+3}}{\binom{n}{n/2}}\sum_{k=1}^\infty
&\frac{4^kH(k)}{\binom{2k}{k}k(2k+n)}\rightarrow\pi^4,\label{6.15}
\end{align}
where the binomial coefficient $\binom{n}{n/2}$ is as in \eqref{3.9} when
$n$ is odd, and $G(k)$, $H(k)$ are as defined in \eqref{5.1} and \eqref{6.1},
respectively.
\end{corollary}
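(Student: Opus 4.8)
The plan is to derive each of the four limits from the corresponding closed‑form evaluation at $x=1$ established in the previous sections: \eqref{3.10} for \eqref{4.16}, \eqref{4.14} and \eqref{4.15} for \eqref{4.17}, \eqref{5.18} and \eqref{5.19} for \eqref{5.20}, and \eqref{6.13} and \eqref{6.14} for \eqref{6.15}. The normalizing factors in Corollary~\ref{cor:7.1} have been chosen precisely so that the factor $\binom{n}{n/2}$ in these identities cancels, reducing each limit to the limit of a bracketed expression built from partial sums. For \eqref{4.16} nothing remains to be done, since \eqref{3.10} asserts $\frac{2^{n+1}}{\binom{n}{n/2}}\sum_{k\ge0}\binom{2k}{k}4^{-k}/(2k+n+1)=\pi$ identically in $n$, so the sequence is constant. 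For the remaining three I would treat even and odd $n$ separately, because the closed forms have different shapes in the two parities.

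When $n$ is even, the cancellation of $\binom{n}{n/2}$ leaves a fixed numerical constant times the bracket, and the limit is obtained by letting the upper summation index $n/2\to\infty$; each partial sum then becomes a convergent series that I evaluate at $x=1$ using the expansions \eqref{1.1}, \eqref{5.1}, \eqref{6.1} together with the Euler values $\sum_{j\ge1}j^{-2}=\pi^2/6$ and $\sum_{j\ge0}(2j+1)^{-2}=\pi^2/8$. For instance, in \eqref{4.17} the normalized sum $\frac{2^{n+1}}{\binom{n}{n/2}}\sum$ tends to $\tfrac12\bigl(\pi^2+2\sum_{j\ge1}4^j/(\binom{2j}{j}j^2)\bigr)=\tfrac12(\pi^2+\pi^2)=\pi^2$ by the second identity in \eqref{1.1}.

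When $n$ is odd the odd‑index closed forms carry the prefactor $\binom{n-1}{(n-1)/2}$ in place of $\binom{n}{n/2}$, so the cancellation is not automatic. Here I would invoke the exact product relation $\binom{n}{n/2}\binom{n-1}{(n-1)/2}=2^{2n}/(\pi n)$, which is exactly the gamma‑function computation carried out in the proof of Corollary~\ref{cor:3.3} (Legendre duplication applied to \eqref{3.11}, with $\binom{n}{n/2}$ read as a generalized binomial coefficient). This collapses the normalization‑times‑prefactor into a clean constant, namely $2\pi$ for \eqref{4.17} and $8\pi$, $48\pi$ for the odd cases of \eqref{5.20} and \eqref{6.15}; the remaining partial sum then converges to a series evaluated at $x=1$ as before. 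For example, the odd case of \eqref{4.17} becomes $2\pi\sum_{j\ge0}\binom{2j}{j}/((2j+1)4^j)=2\pi\cdot\tfrac{\pi}{2}=\pi^2$ by the first identity in \eqref{1.1}.

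The step I expect to be the main obstacle is the evaluation of the double sums that arise for \eqref{5.20} and, above all, for \eqref{6.15}. The bracket limit then contains a term such as $\sum_{r\ge1}\bigl(\sum_{j\ge r}j^{-2}\bigr)4^r/(\binom{2r}{r}r^2)$ (even case) or $\sum_{r\ge0}\bigl(\tfrac{\pi^2}{8}-\sum_{j\ge r}(2j+1)^{-2}\bigr)\binom{2r}{r}/((2r+1)4^r)$ (odd case). The key device is to recognize the inner (co)tail as a finite partial sum, $\sum_{j=1}^{r-1}j^{-2}=4H(r)$ and $\sum_{j=0}^{r-1}(2j+1)^{-2}=G(r)$ in the notation of \eqref{6.1} and \eqref{5.1}; this turns the double sum into a single series, $\sum_r 4^rH(r)/(\binom{2r}{r}r^2)$ respectively $\sum_r\binom{2r}{r}G(r)/((2r+1)4^r)$, whose values $\pi^4/96$ and $\pi^3/48$ are read off from \eqref{6.1} and \eqref{5.1} at $x=1$, the leftover single sums being handled by \eqref{1.1}. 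One must also justify interchanging $n\to\infty$ with the double summation, which is immediate from positivity and monotone convergence since every term is nonnegative and dominated by the corresponding full double series. An attractive alternative, which proves all four limits uniformly and sidesteps the double sums entirely, is to start from the integrated form $\sum_k c_k/(2k+\delta+n)=\int_0^1 t^nf'(t)\,dt$ of Theorem~\ref{thm:2.1}, substitute $t=\sin\theta$ to obtain a constant multiple of $\int_0^{\pi/2}\theta^{p-1}\sin^n\theta\,d\theta$ when $f=(\arcsin x)^p$, and apply the Laplace‑type estimate $\int_0^{\pi/2}\theta^{p-1}\sin^n\theta\,d\theta\sim(\pi/2)^{p-1}\int_0^{\pi/2}\sin^n\theta\,d\theta$ together with the Wallis asymptotics hidden in the normalization.
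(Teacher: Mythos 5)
Your primary argument is essentially the paper's own proof: specialize \eqref{3.10}, \eqref{4.14}--\eqref{4.15}, \eqref{5.18}--\eqref{5.19} and \eqref{6.13}--\eqref{6.14} at $x=1$, split according to parity, collapse the odd-$n$ prefactors via $n\binom{n}{n/2}\binom{n-1}{(n-1)/2}=2^{2n}/\pi$ (which is precisely \eqref{4.17a}, obtained from Legendre duplication in the proof of Corollary~\ref{cor:3.3}), and let the partial sums tend to full series evaluated by \eqref{1.1}, \eqref{5.1}, \eqref{6.1}, $\zeta(2)=\pi^2/6$ and $\sum_{j\geq0}(2j+1)^{-2}=\pi^2/8$, recognizing the inner tails of the double sums as $4H(r)$ and $G(r)$. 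Your constants $2\pi$, $8\pi$, $48\pi$ and the values $\pi^3/48$, $\pi^4/96$ are all correct, and your Tannery/monotone-convergence remark supplies a justification the paper leaves implicit.

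One concrete caution about the step you yourself singled out as the main obstacle: carried out literally from \eqref{6.13} as printed, the even case of \eqref{6.15} yields $2\pi^4$, not $\pi^4$. The bracket in \eqref{6.13} tends to $\frac{\pi^4}{12}+\frac{\pi^4}{3}-2\bigl(\frac{\pi^4}{12}-\frac{\pi^4}{24}\bigr)=\frac{\pi^4}{3}$, and the normalization multiplies this by $6$. The fault lies not in your method but in a factor-of-two misprint in \eqref{6.13} (inherited from \eqref{6.11}): at $n=0$ its left side is $\frac12\cdot\frac{\pi^4}{96}=\frac{\pi^4}{192}$ while its right side is $\frac{\pi^4}{96}$, so the prefactor should read $\binom{n}{n/2}/2^{n+4}$, after which the limit becomes $3\cdot\frac{\pi^4}{3}=\pi^4$ as claimed. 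The paper's own proof contains the same inconsistency (it computes $A=\pi^4/3$ in \eqref{6.16} and then asserts \eqref{6.15}, although $6A=2\pi^4$), so your writeup is no worse off; but your sketched alternative --- $\sum_k c_k/(2k+\delta+n)=p\int_0^{\pi/2}\theta^{p-1}\sin^n\theta\,d\theta$ followed by Laplace and Wallis asymptotics --- is a genuinely different argument that bypasses the misprint, proves all four limits uniformly, and independently confirms that $\pi^4$ is the correct value. The paper uses that kind of argument only in Corollary~\ref{cor:7.2}.
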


\begin{proof}
The limit \eqref{4.16} follows immediately from \eqref{3.10}. To prove 
\eqref{4.17}, we first consider \eqref{4.14} and use the fact that by the
second identity in \eqref{1.1} with $x=1$, the sum on the right approaches 
$\pi^2/2$. This leads to the limit \eqref{4.17} as $n\rightarrow\infty$ for 
even~$n$.

For odd $n$, we consider \eqref{4.15} and use the fact that by the first 
identity in \eqref{1.1} with
$x=1$, the sum on the right approaches $\pi/2$. But in the proof of 
Corollary~\ref{cor:3.3} we showed that for odd $n$,
\begin{equation}\label{4.17a}
\frac{2^n}{n\binom{n-1}{(n-1)/2}} = \binom{n}{n/2}\frac{\pi}{2^n}. 
\end{equation}
This shows that the limit \eqref{4.17} also holds for odd $n\rightarrow\infty$.

To prove \eqref{5.20}, we begin with \eqref{5.18} and multiply both sides 
by $2^{n+3}/\binom{n}{n/2}$. If we take the limit as $n\rightarrow\infty$ 
over even $n$, the terms in the large parentheses on the right become
\begin{equation}\label{5.20a}
\frac{\pi^3}{6}+\pi\sum_{j=1}^\infty\frac{4^j}{\binom{2j}{j}j^2}
-\pi\sum_{j=1}^\infty\frac{1}{j^2}=\frac{\pi^3}{6}+\pi\cdot2\arcsin^2(1)
-\pi\cdot\frac{\pi^2}{6}=\frac{\pi^2}{2},
\end{equation}
where we have used the second identity in \eqref{1.1} with $x=1$, as well as
Euler's evaluation for $\zeta(2)$. This gives \eqref{5.10} for even $n$.

When $n$ is odd, we multiply both sides of \eqref{5.19} by
$n\binom{n-1}{(n-1)/2}2^{1-n}$ and take the limit as $n\rightarrow\infty$ over
odd $n$. The first series on the right then evaluates as $\arcsin(1)=\pi/2$
by the first identity in \eqref{1.1}, and the second series on the right of
\eqref{5.19} is a well-known variant of $\zeta(2)$ and evaluates as $\pi^2/8$. 
Using \eqref{4.17a}, we see that again we get the limit \eqref{5.20}, this time 
for odd $n$.

To prove \eqref{6.15},
we begin with \eqref{6.13} and multiply both sides by $2^{n+3}/\binom{n}{n/2}$.
If we take the limit as $n\rightarrow\infty$ over even $n$, the terms in the
large parentheses on the right become
\begin{equation}\label{6.16}
A:=\frac{\pi^4}{12}+\pi^2\sum_{j=1}^\infty\bigg(\frac{4^j}{\binom{2j}{j}}-1\bigg)
\frac{1}{j^2}-2\sum_{r=1}^\infty\bigg(\frac{\pi^2}{6}-\sum_{j=1}^r\frac{1}{j^2}\bigg)\frac{4^r}{\binom{2r}{r}r^2},
\end{equation}
where we have used the well-known evaluation for $\zeta(2)$ in the sum on the
right. Now, the first sum in \eqref{6.16} has already been evaluated in 
\eqref{5.20a} as $\pi^2/2-\pi^2/6=\pi^2/3$. Furthermore, the
inner sum on the right of \eqref{6.16} is $4H(3)$ by the definition in
\eqref{6.1}. Hence we have
\begin{align*}
A&=\frac{\pi^4}{12}+\pi^2\cdot\frac{\pi^3}{3}-\frac{\pi^3}{3}
\sum_{r=1}^\infty\frac{4^r}{\binom{2r}{r}r^2}
+8\sum_{r=1}^\infty\frac{H(r)4^r}{\binom{2r}{r}r^2} \\
&=\frac{\pi^4}{12}+\frac{\pi^4}{3}
-\frac{\pi^2}{3}\cdot 2\cdot\big(\frac{\pi}{2}\big)^2
+8\cdot\frac{1}{6}\cdot\big(\frac{\pi}{2}\big)^4 = \frac{\pi^4}{3}.
\end{align*}
This, with \eqref{6.13}, gives the limit in \eqref{6.15} for even $n$.

Next, the sum on the right of \eqref{6.14} is easily seen to have the limit
\[
\sum_{r=0}^\infty\bigg(\sum_{j=0}^{r-1}\frac{1}{(2j+1)^2}\bigg)
\frac{\binom{2r}{r}}{(2r+1)4^r}
=\sum_{r=0}^\infty\frac{G(r)\binom{2r}{r}}{(2r+1)4^r}
=\frac{1}{6}\big(\frac{\pi}{2}\big)^3,
\]
where we have used \eqref{5.1}. Finally, if we use \eqref{4.17}, we see that 
\eqref{6.14} leads to the same limit \eqref{6.15}, now for odd $n$. This 
completes the proof.
\end{proof}

We saw that the limits in Corollary~\ref{cor:7.1} all come from the special case
$x=1$ in relevant earlier results. The following limit expressions are of a
different kind and are only valid for $|x|<1$.

\begin{corollary}\label{cor:7.2}
We have the following limits as $n\rightarrow\infty$:
\begin{align}
(2n+1)\sum_{k=0}^\infty&\frac{\binom{2k}{k}}{2k+2n+1}\big(\frac{x}{2}\big)^{2k}
\rightarrow\frac{1}{\sqrt{1-x^2}},\label{7.8}\\
(2n+1)\sum_{k=1}^\infty&\frac{(2x)^{2k-1}}{\binom{2k}{k}k(2k+2n)}
\rightarrow\frac{\arcsin{x}}{\sqrt{1-x^2}},\label{7.9}
\end{align}
uniformly on compact subsets of the interval $(-1,1)$.
\end{corollary}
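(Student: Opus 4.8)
The plan is to compare each series with its $n$-independent limit, which in both cases is exactly one of the known expansions already recorded in the excerpt. For \eqref{7.8} the relevant identity is \eqref{3.16}, namely $1/\sqrt{1-x^2}=\sum_{k=0}^\infty\binom{2k}{k}(x/2)^{2k}$, and for \eqref{7.9} it is \eqref{3.14}, namely $\arcsin{x}/\sqrt{1-x^2}=\sum_{k=1}^\infty(2x)^{2k-1}/(\binom{2k}{k}k)$. Both of these have radius of convergence $R=1$ (as noted in the convergence remark of Section~\ref{sec:2}), so for any $\rho<1$ they converge absolutely at $x=\rho$, and since all their terms are positive these evaluations furnish the dominating series we will need on $[-\rho,\rho]$.

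For \eqref{7.8} I would first observe that $\frac{2n+1}{2k+2n+1}\le 1$ for all $k\ge0$, $n\ge0$, and that $\frac{2n+1}{2k+2n+1}\to1$ as $n\to\infty$ for each fixed $k$. This puts us in a dominated-convergence (Tannery) setting and already gives pointwise convergence to $1/\sqrt{1-x^2}$. To obtain uniformity on $[-\rho,\rho]$, I would write the difference explicitly,
\[
\frac{1}{\sqrt{1-x^2}}-(2n+1)\sum_{k=0}^\infty\frac{\binom{2k}{k}}{2k+2n+1}\bigl(\tfrac{x}{2}\bigr)^{2k}
=\sum_{k=0}^\infty\binom{2k}{k}\bigl(\tfrac{x}{2}\bigr)^{2k}\frac{2k}{2k+2n+1},
\]
and split the sum at an index $K$: for $k\le K$ one bounds $\frac{2k}{2k+2n+1}\le\frac{2K}{2n+1}\to0$ uniformly in $x$, while for $k>K$ one bounds the factor by $1$ and controls the tail $\sum_{k>K}\binom{2k}{k}(\rho/2)^{2k}$, which is arbitrarily small for $K$ large. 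Given $\varepsilon>0$, choosing $K$ first to kill the tail and then $n$ large to kill the head yields uniform convergence on $[-\rho,\rho]$.

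The argument for \eqref{7.9} is entirely parallel, comparing with \eqref{3.14} instead. Here the relevant factor is $\frac{2n+1}{2k+2n}$, which satisfies $\frac{2n+1}{2k+2n}\le1$ for all $k\ge1$ (since $2k\ge2>1$) and tends to $1$ for each fixed $k$, with dominating series $\sum_{k\ge1}(2\rho)^{2k-1}/(\binom{2k}{k}k)$ convergent for $\rho<1$. The difference is
\[
\frac{\arcsin{x}}{\sqrt{1-x^2}}-(2n+1)\sum_{k=1}^\infty\frac{(2x)^{2k-1}}{\binom{2k}{k}k(2k+2n)}
=\sum_{k=1}^\infty\frac{(2x)^{2k-1}}{\binom{2k}{k}k}\cdot\frac{2k-1}{2k+2n},
\]
and the same head/tail splitting at $K$ (using $\frac{2k-1}{2k+2n}\le\frac{K}{n}$ for $k\le K$) delivers the claim.

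The one point requiring care is the uniformity rather than mere pointwise convergence: the pointwise statements follow at once from term-by-term passage to the limit, but the uniform version needs the explicit head/tail decomposition together with the uniform-in-$x$ bound on the dominating tail. It is precisely this tail bound that forces the restriction to compact subsets of $(-1,1)$, since as $\rho\to1$ the dominating series ceases to be summable in the way required. I expect this uniformity bookkeeping to be the only nontrivial part; everything else is a direct comparison with \eqref{3.16} and \eqref{3.14}.
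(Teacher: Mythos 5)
Your proposal is correct, and at its core it is the same argument as the paper's: both proofs compare the $n$-dependent series term by term with the limiting series \eqref{3.16}, resp.\ \eqref{3.14}, in a dominated-convergence (Tannery) framework. The differences are in execution, and they are worth noting. The paper invokes Tannery's theorem as a black box for the pointwise limit, building its dominating series $M_k$ and $N_k(x)$ from the explicit binomial asymptotics \eqref{7.10}, and then dispatches uniformity with the one-line remark that ``we are dealing with power series.'' You instead subtract the limit series explicitly, using $1-\tfrac{2n+1}{2k+2n+1}=\tfrac{2k}{2k+2n+1}$ (and its analogue $\tfrac{2k-1}{2k+2n}$), and run a head/tail $\varepsilon$-splitting at an index $K$. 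This buys you two things: first, you never need \eqref{7.10}, since the observation that the $n$-dependent factor is at most $1$ makes the limit series itself, evaluated at $x=\rho$, serve as the dominating series; second, and more importantly, your splitting proves the uniform convergence on $[-\rho,\rho]$ directly, whereas the paper's appeal to the power-series structure is the weakest step of its proof (pointwise convergence of analytic functions does not by itself give local uniform convergence without some additional input). One cosmetic caveat: for \eqref{7.9} the terms change sign with $x$, so the head and tail estimates should be stated with absolute values, dominating $|2x|^{2k-1}$ by $(2\rho)^{2k-1}$; this is implicit in your write-up and harmless. In short, your proof is a self-contained and slightly more rigorous rendering of the paper's argument.
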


\begin{proof}
While originally these two limits were obtained from \eqref{3.12} and 
\eqref{4.20}, respectively, we present the following simpler proofs. We rewrite
the series in \eqref{7.8} as
\[
\sum_{k=0}^\infty\frac{\binom{2k}{k}}{\frac{2k}{2n+1}+1}
\big(\frac{x}{2}\big)^{2k} = \sum_{k=0}^\infty a_k(x).
\]
The well-known asymptotics for the central binomial coefficients imply, for
$k\geq 1$, 
\begin{equation}\label{7.10}
\frac{4^k}{\sqrt{\pi k}}\cdot\frac{7}{8}
\leq\frac{4^k}{\sqrt{\pi k}}\bigl(1-\frac{1}{8k}\bigr) < \binom{2k}{k}
< \frac{4^k}{\sqrt{\pi k}},
\end{equation}
and with the right-most inequality we get
\[
\left|a_k(x)\right|\leq\binom{2k}{k}\left|\frac{x}{2}\right|^{2k}
< \frac{|x|^{2k}}{\sqrt{\pi k}} =: M_k.
\]
Since $\sum M_k$ converges for all $|x|<1$, we can use Tannery's theorem for 
the interchange of limit and infinite series; see, e.g., \cite[p.~136]{Bro}.
Hence we get
\[
\lim_{n\to\infty}\bigg(\sum_{k=0}^\infty a_k(x)\bigg)
= \sum_{k=0}^\infty\big(\lim_{n\to\infty} a_k(x)\big)
= \sum_{k=0}^\infty\binom{2k}{k}\big(\frac{x}{2}\big)^{2k}
= \frac{1}{\sqrt{1-x^2}},
\]
where we have used \eqref{3.16} for the final equality.

Similarly, we rewrite the series in \eqref{7.9} as
\[
\sum_{k=1}^\infty\frac{(2x)^{2k-1}}{\binom{2k}{k}k\big(\frac{2k-1}{2n+1}+1\big)}
= \sum_{k=1}^\infty b_k(x).
\]
With the left inequalities of \eqref{7.10} we have
\[
\left|b_k(x)\right|
\leq\frac{8\sqrt{\pi k}}{7\cdot 4^k}\cdot\frac{2^{2k-1}}{k}|x|^{2k-1}
=\frac{4}{7}\sqrt{\frac{\pi}{k}}\cdot |x|^{2k-1} =: N_k(x),
\]
and since $\sum N_k(x)$ converges for $|x|<1$, we can use Tannery's theorem 
again. We then get
\[
\lim_{n\to\infty}\bigg(\sum_{k=0}^\infty b_k(x)\bigg)
=\sum_{k=1}^\infty\frac{(2x)^{2k-1}}{\binom{2k}{k}k}
= \frac{\arcsin{x}}{\sqrt{1-x^2}},
\]
where we have used \eqref{3.14}.
Since in both cases we are dealing with power series, convergence is uniform on
compact subsets of the interval $(-1,1)$.
\end{proof}

\section{Connections with hypergeometric functions}\label{sec:8}

In this section we mention some connections with hypergeometric series
(or functions), which are among the most important objects in the theory of
special functions and their applications. We recall that {\it generalized
hypergeometric functions\/} are defined by power series of the form
\begin{equation}\label{8.7}
_pF_q(a_1,\ldots,a_p;b_1,\ldots,b_q;z)=\sum_{k=0}^\infty
\frac{(a_1)_k\cdots(a_p)_k}{(b_1)_k\cdots(b_q)_k}\cdot\frac{z^k}{k!},
\end{equation}
with the Pochhammer symbol (or rising factorial) defined by $(a)_0=1$ and
\[
(a)_k=a(a+1)\cdots(a+k-1),\qquad k\geq 1.
\]
The most important special case is $_2F_1(a_1,a_2;b_1;z)$, the {\it Gaussian 
hypergeometric function}. Numerous properties can be found, e.g., in Chapters~15
and~16 of \cite{DLMF}. We are now ready to state and prove the following.

\begin{corollary}\label{cor:8.2}
For any integer $n\geq 0$, the identity \eqref{3.7} is equivalent to
\begin{equation}\label{8.8}
_2F_1(\tfrac{1}{2},n+\tfrac{1}{2};n+\tfrac{3}{2};x^2)
=\sqrt{1-x^2}\cdot{_2F_1(1,n+1;n+\tfrac{3}{2};x^2)}.
\end{equation}
\end{corollary}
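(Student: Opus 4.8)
The plan is to recognize both sides of \eqref{3.7} as hypergeometric series and then translate the algebraic identity \eqref{3.7} into the claimed relation \eqref{8.8}. First I would rewrite the left-hand sum of \eqref{3.7} in hypergeometric form. Using $\binom{2k}{k}/4^k = (\tfrac12)_k/k!$ together with the identity $2k+n+1 = (n+1)\cdot\frac{(n+\frac32)_k}{(n+\frac12)_k}$ (which follows from $\frac{(n+\frac12)_k}{(n+\frac32)_k} = \frac{n+\frac12}{n+\frac12+k} = \frac{2n+1}{2k+2n+1}$), I can show that
\[
\sum_{k=0}^\infty\frac{\binom{2k}{k}}{2k+n+1}\Bigl(\frac{x}{2}\Bigr)^{2k}
= \frac{1}{n+1}\,{_2F_1}\!\left(\tfrac12,n+\tfrac12;n+\tfrac32;x^2\right).
\]
This identifies the left side of \eqref{8.8} (up to the harmless factor $1/(n+1)$) as a rescaling of the series appearing in \eqref{3.7}.

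Next I would treat the right-hand side. The key is that the bracketed expression in \eqref{3.7}, after multiplication by the prefactor, should collapse into the single $_2F_1$ with the factor $\sqrt{1-x^2}$ pulled out front. The cleanest route avoids manipulating the finite sum in \eqref{3.7} directly: instead I would use \eqref{3.15}, which expresses $\arcsin x/\sqrt{1-x^2}$ as the infinite series $\tfrac12\sum_j (2x)^{2j+1}/((2j+1)\binom{2j}{j})$, to reinterpret the right-hand side as an infinite series and then recast that series hypergeometrically. Concretely, the function $\sqrt{1-x^2}\cdot{_2F_1}(1,n+1;n+\tfrac32;x^2)$ has a power series whose coefficients I would match against the coefficients produced by \eqref{3.7}. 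Since $\arcsin x$ and $\sqrt{1-x^2}$ both have standard $_2F_1$ representations, this is a matter of applying a contiguous-relation or an Euler/Pfaff transformation of the form $_2F_1(a,b;c;z) = (1-z)^{c-a-b}\,{_2F_1}(c-a,c-b;c;z)$.

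In fact the slickest proof is to observe that \eqref{8.8} is exactly the Euler transformation $_2F_1(a,b;c;z)=(1-z)^{c-a-b}{_2F_1}(c-a,c-b;c;z)$ specialized to $a=\tfrac12$, $b=n+\tfrac12$, $c=n+\tfrac32$, $z=x^2$: then $c-a-b = (n+\tfrac32)-\tfrac12-(n+\tfrac12) = \tfrac12$, so $(1-z)^{c-a-b}=\sqrt{1-x^2}$, while $c-a = n+1$ and $c-b = 1$, reproducing the parameters on the right. Thus I would present \eqref{8.8} as an instance of a classical transformation, and the content of Corollary~\ref{cor:8.2} is the equivalence of this transformation with the concrete series identity \eqref{3.7}. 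I would verify the equivalence by showing the two directions: expanding each $_2F_1$ back into the series of \eqref{3.7} via the coefficient identifications above recovers \eqref{3.7}, and conversely \eqref{3.7} together with \eqref{3.15} yields \eqref{8.8}.

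The main obstacle I anticipate is the bookkeeping on the right-hand side: \eqref{3.7} is stated for even $n$ with a finite sum running to $(n-2)/2$, whereas \eqref{8.8} is claimed for all $n\geq 0$ and contains no visible truncation. The reconciliation must come from \eqref{3.15}, which turns the finite sum plus the leading $2\arcsin x$ term into the tail of an infinite series; I expect the careful step to be confirming that the finite-sum structure in \eqref{3.7} is precisely what is needed so that, after using \eqref{3.15}, the odd and even parities both fold into the single uniform statement \eqref{8.8}. If the direct coefficient matching becomes unwieldy, the fallback is simply to cite the Euler transformation and then check \eqref{3.7} is its series-level restatement, which sidesteps the parity split entirely.
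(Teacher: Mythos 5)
Your approach is essentially the paper's: identify the left-hand sum of \eqref{3.7} as a ${}_2F_1$ (the paper's \eqref{8.9}), use \eqref{3.15} to convert the bracketed expression on the right of \eqref{3.7} into the tail $\sum_{j\geq n}$ of the infinite series, and recognize that tail hypergeometrically (the paper's \eqref{8.10}); the paper outsources these two identifications to Mathematica, while you do them by hand via Pochhammer manipulations. The one genuine difference is your closing observation that \eqref{8.8} is precisely Euler's transformation ${}_2F_1(a,b;c;z)=(1-z)^{c-a-b}\,{}_2F_1(c-a,c-b;c;z)$ with $a=\tfrac12$, $b=n+\tfrac12$, $c=n+\tfrac32$, $z=x^2$; this is cleaner than what the paper does after its proof, namely assembling \eqref{8.8} from two entries of Prudnikov--Brychkov--Marichev, and it serves the same purpose of converting the equivalence into an independent proof of \eqref{3.7}.

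One piece of bookkeeping needs repair before the argument is sound. The dictionary between the two statements is the substitution $n\mapsto 2n$: the $n$ in \eqref{8.8} is \emph{half} of the even $n$ in \eqref{3.7}. Your displayed identification
\[
\sum_{k=0}^\infty\frac{\binom{2k}{k}}{2k+n+1}\bigl(\tfrac{x}{2}\bigr)^{2k}
=\frac{1}{n+1}\,{}_2F_1\bigl(\tfrac12,n+\tfrac12;n+\tfrac32;x^2\bigr)
\]
is false as written for $n\geq 1$ (compare the $k=1$ coefficients: $\tfrac{1}{2(n+3)}$ versus $\tfrac{2n+1}{2(n+1)(2n+3)}$); what your own parenthetical computation actually proves is
\[
\sum_{k=0}^\infty\frac{\binom{2k}{k}}{2k+2n+1}\bigl(\tfrac{x}{2}\bigr)^{2k}
=\frac{1}{2n+1}\,{}_2F_1\bigl(\tfrac12,n+\tfrac12;n+\tfrac32;x^2\bigr),
\]
i.e.\ the paper's \eqref{8.9}, which is the left side of \eqref{3.7} \emph{after} replacing $n$ by $2n$. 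Relatedly, your concern that ``the odd and even parities both fold into'' \eqref{8.8} is a red herring: Corollary~\ref{cor:8.2} involves only the even-index identity \eqref{3.7}; the odd case \eqref{3.8} never enters, and both parities of the $n$ in \eqref{8.8} correspond to even indices $2n$ in \eqref{3.7}. Once the substitution $n\mapsto 2n$ is made explicit throughout, the rest of your argument goes through and coincides with the paper's.
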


\begin{proof}
We replace $n$ by $2n$ in \eqref{3.7} and note that Mathematica evaluates the
left-hand sum as
\begin{equation}\label{8.9}
\sum_{k=0}^\infty\frac{\binom{2k}{k}}{2k+2n+1}\bigl(\frac{x}{2}\bigr)^{2k}
=\frac{1}{2n+1}{_2F_1(\tfrac{1}{2},n+\tfrac{1}{2};n+\tfrac{3}{2};x^2)}.
\end{equation}
For the sum on the right of \eqref{3.7}, Mathematica gives
\begin{equation}\label{8.10}
\sum_{j=0}^{n-1}\frac{(2x)^{2j+1}}{(2j+1)\binom{2j}{j}}
=2\cdot\frac{\arcsin{x}}{\sqrt{1-x^2}}-
\frac{(2x)^{2n+1}}{(2n+1)\binom{2n}{n}}{_2F_1(1,n+1;n+\tfrac{3}{2};x^2)},
\end{equation}
where, by \eqref{3.15}, the sum on the left is a partial sum of the whole series
which has $2\arcsin{x}/\sqrt{1-x^2}$ as its generating function. Combining 
\eqref{8.9} and \eqref{8.10} with \eqref{3.7}, we get \eqref{8.8} after some
simplification.
\end{proof}

We remark that the identity \eqref{8.8} can also be obtained from the existing
literature. Indeed, from \cite[eq.~7.3.1.32]{PrM} we have
\[
_2F_1(1,n+1;n+\tfrac{3}{2};x^2)=\frac{1}{\sqrt{1-x^2}}
{_2F_1(1,2n+1;n+\tfrac{3}{2};\tfrac{1}{2}(1-\sqrt{1-x^2})},
\]
and from \cite[eq.~7.3.1.37]{PrM},
\[
_2F_1(\tfrac{1}{2},n+\tfrac{1}{2};n+\tfrac{3}{2};x^2)=
{_2F_1(1,2n+1;n+\tfrac{3}{2};\tfrac{1}{2}(1-\sqrt{1-x^2})},
\]
which combined immediately give \eqref{8.8}. This approach can therefore be seen
as an alternative proof of the identity \eqref{3.7}.

\medskip
We now apply a similar approach to the identity \eqref{4.12} in 
Theorem~\ref{thm:4.2}.

\begin{corollary}\label{cor:8.3}
For any integer $n\geq 1$, the identity \eqref{4.12} is equivalent to
\begin{align}
(2n-1)\,&{_3F_2(1,1,n;\tfrac{3}{2},n+1;x)}
+\,{_3F_2(1,n,n;n+\tfrac{1}{2},n+1;x)} \label{8.11}\\
&=2n\,{_2F_1(\tfrac{1}{2},1;\tfrac{3}{2};\tfrac{x}{x-1})}\,
{_2F_1(1,n;n+\tfrac{1}{2};x)}.\nonumber
\end{align}
\end{corollary}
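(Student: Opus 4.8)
The plan is to mirror the proof of Corollary~\ref{cor:8.2}, translating each constituent of the identity \eqref{4.12} into hypergeometric form. First I would replace $n$ by $2n$ so that only the even case \eqref{4.12} is in play, and then identify the three kinds of sums appearing there. The left-hand series $\sum_{k\geq 1}(2x)^{2k}/\binom{2k}{k}k(2k+n)$ should be recognized as a $_3F_2$ evaluated at $x$ (not $x^2$, since the summand involves $4^k$): writing the general term with Pochhammer symbols, the factors $1/k$ and $1/(2k+n)$ produce the denominator parameters $\tfrac{3}{2}$ (or $n+\tfrac12$ after the shift) and $n+1$, while the $\binom{2k}{k}$ in the denominator supplies an $x^k$-type variable. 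I would let Mathematica (as the authors do) confirm the precise parameters, expecting the two $_3F_2$ terms on the left of \eqref{8.11} to arise from splitting $\tfrac{1}{k(2k+n)}$ by partial fractions, one piece giving $_3F_2(1,1,n;\tfrac32,n+1;x)$ and the other $_3F_2(1,n,n;n+\tfrac12,n+1;x)$.

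Next I would convert the right-hand side of \eqref{4.12}. The term $\arcsin^2 x$ and the finite sums $\sum \tfrac{(2x)^{2j}}{\binom{2j}{j}j^2}$ are both partial-sum truncations of the series in \eqref{1.1}, and the factor $\sqrt{1-x^2}\cdot\arcsin x\cdot\sum\tfrac{(2x)^{2j+1}}{\binom{2j}{j}(2j+1)}$ combines a truncation of \eqref{3.15} with an $\arcsin$. The key observation, exactly as in \eqref{8.10}, is that each finite sum equals its full (closed-form) series minus a tail, and each tail is itself a $_2F_1$ or the product of $\sqrt{1-x^2}$ with a $_2F_1$. The product $_2F_1(\tfrac12,1;\tfrac32;\tfrac{x}{x-1})\,{_2F_1(1,n;n+\tfrac12;x)}$ on the right of \eqml~\eqref{8.11} presumably comes from recognizing $\arcsin x/\sqrt{1-x^2}$ (or a Pfaff-transformed version thereof, which explains the argument $\tfrac{x}{x-1}$) as a $_2F_1$, multiplied by the tail sum. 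Here I would invoke the Pfaff transformation $_2F_1(a,b;c;z)=(1-z)^{-a}{_2F_1(a,c-b;c;\tfrac{z}{z-1})}$ to account for the shifted argument.

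Once every piece is expressed hypergeometrically, the proof reduces to substituting these identifications into \eqref{4.12}, clearing the common factors $\binom{2n}{n}$, $2^{2n}$, and powers of $x$, and simplifying. The transcendental pieces ($\arcsin x$, $\sqrt{1-x^2}$) must cancel or recombine into the stated product of $_2F_1$'s, leaving a purely algebraic identity among hypergeometric functions; this is the same mechanism by which \eqref{8.8} emerged in Corollary~\ref{cor:8.2}.

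I expect the main obstacle to be pinning down the exact Pochhammer bookkeeping for the two $_3F_2$ terms on the left and verifying that the arcsine-bearing tails reassemble cleanly into the single product on the right. The partial-fraction split of $1/(k(2k+n))$ introduces an extra factor of $2n-1$ (seen multiplying the first $_3F_2$) and a delicate interplay between the shift $n\mapsto 2n$ and the half-integer parameter $n+\tfrac12$; getting these constants right, together with correctly applying Pfaff's transformation to produce the argument $\tfrac{x}{x-1}$, is where the computation is most likely to go astray. As in the authors' treatment of \eqref{8.9}--\eqref{8.10}, I would lean on a computer algebra system to supply and confirm the closed forms, and then present the final assembly as a routine—if tedious—simplification.
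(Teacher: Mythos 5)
Your overall strategy is the paper's: replace $n$ by $2n$, let a CAS convert each sum in \eqref{4.12} to hypergeometric form, write the finite sums as full series minus hypergeometric tails (reusing \eqref{8.10}), and finish with a Pfaff-type transformation to produce the argument $\tfrac{x}{x-1}$. However, there is a concrete wrong step at the heart of your plan: you assert that the two $_3F_2$'s on the left of \eqref{8.11} arise from a partial-fraction splitting of $\tfrac{1}{k(2k+n)}$ in the left-hand series of \eqref{4.12}. That is not the case, and the split would not produce them. Writing $\tfrac{1}{k(2k+2n)}=\tfrac{1}{2n}\bigl(\tfrac{1}{k}-\tfrac{1}{k+n}\bigr)$ yields, via \eqref{3.14}, the function $\tfrac{2x\arcsin{x}}{\sqrt{1-x^2}}$ together with a $_3F_2$ whose upper parameters are $(1,2,n+1)$ --- neither of which appears in \eqref{8.11}. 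In particular, the repeated upper parameter $n,n$ in the second $_3F_2$ of \eqref{8.11} is the signature of a $1/j^2$ factor in a summand, which no partial-fraction manipulation of $1/(k(2k+2n))$ can create.

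In the paper's proof the two $_3F_2$'s have different origins: the first, $_3F_2(1,1,n+1;\tfrac32,n+2;x^2)$, \emph{is} the left-hand series of \eqref{4.12} itself (equation \eqref{8.12}, no splitting involved), while the second, $_3F_2(1,n+1,n+1;n+\tfrac32,n+2;x^2)$, is the tail correction in \eqref{8.13} coming from the finite sum $\tfrac12\sum_{j=1}^{n}\tfrac{(2x)^{2j}}{\binom{2j}{j}j^2}$ on the right of \eqref{4.12}. After substituting \eqref{8.12}, \eqref{8.13}, and \eqref{8.10}, the three $\arcsin^2{x}$ terms cancel identically, $\arcsin{x}=x\,{_2F_1(\tfrac12,\tfrac12;\tfrac32;x^2)}$ and the Pfaff transformation handle the remaining transcendental factor, and one arrives at \eqref{8.14}. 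Finally --- and this explains the constants you attributed to partial fractions --- the factor $2n-1$ and the parameters $n,n+\tfrac12,n+1$ in \eqref{8.11} come from multiplying \eqref{8.14} by $2n+1$ and then relabeling $n+1\mapsto n$ and $x^2\mapsto x$ (the latter also being the real reason the argument is $x$ rather than $x^2$, not the presence of $4^k$, which is absorbed by $\binom{2k}{k}\sim 4^k(\tfrac12)_k/k!$). Had you run your plan as stated, the CAS output would not have matched your expected decomposition, and you would have needed to redirect the argument along these lines.
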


\begin{proof}
We replace $n$ by $2n$ in \eqref{4.12}. Then Mathematica identifies the 
left-hand series as
\begin{equation}\label{8.12}
\sum_{k=1}^\infty\frac{(2x)^{2k}}{\binom{2k}{k}k(2k+2n)}
=\frac{x^2}{n+1}{_3F_2(1,1,n+1;\tfrac{3}{2},n+2;x^2)},
\end{equation}
and the right-most sum as
\begin{equation}\label{8.13}
\frac{1}{2}\sum_{j=1}^n\frac{(2x)^{2j}}{\binom{2j}{j}j^2} =\arcsin^2{x}
-\frac{(2x)^{2n+2}}{2(n+1)^2\binom{2n+2}{n+1}}\,
{_3F_2(1,n+1,n+1;n+\tfrac{3}{2},n+2;x^2)}.
\end{equation}
Substituting \eqref{8.12}, \eqref{8.13}, and \eqref{8.10} into the modified
identity \eqref{4.12}, we get after some simplification,
\begin{align}
&{_3F_2(1,1,n+1;\tfrac{3}{2},n+2;x^2)}
+\frac{1}{2n+1}\,{_3F_2(1,n+1,n+1;n+\tfrac{3}{2},n+2;x^2)} \label{8.14}\\
&=\frac{2n+2}{2n+1}\,{_2F_1(\tfrac{1}{2},\tfrac{1}{2};\tfrac{3}{2};x^2)}
\,{_2F_1(1,n+1;n+\tfrac{3}{2};x^2)},\nonumber
\end{align}
where we have used the well-known identity
\[
\arcsin{x}=x\,{_2F_1(\tfrac{1}{2},\tfrac{1}{2};\tfrac{3}{2};x^2)}.
\]
We now use another well-known identity, namely (see, e.g., 
\cite[eq.~15.8.1]{DLMF})
\[
{_2F_1(\tfrac{1}{2},\tfrac{1}{2};\tfrac{3}{2};x^2)}=\frac{1}{\sqrt{1-x^2}}\,
{_2F_1(\tfrac{1}{2},1;\tfrac{3}{2};\tfrac{x^2}{x^2-1})},
\]
and finally multiply both sides of \eqref{8.14} by $2n+1$ and then replace
$x^2$ by $x$ and $n+1$ by $n$. This immediately gives \eqref{8.11}.
\end{proof}

We were unable to find \eqref{8.11} in the literature. It is a somewhat unusual
identity in that it expresses the product of two $_2F_1$ functions as a linear
combination of two $_3F_2$ functions.

\end{document}